\theoremstyle{plain}
  \newtheorem{thm}{Theorem}[section]
  \newtheorem{prop}[thm]{Proposition}
  \newtheorem{lem}[thm]{Lemma}
  \newtheorem{cor}[thm]{Corollary}
\theoremstyle{definition}
  \newtheorem{dfn}[thm]{Definition}
  \newtheorem{exmp}[thm]{Example}
  \newtheorem{exmps}[thm]{Examples}
\theoremstyle{remark}
  \newtheorem{rem}[thm]{Remark}
\noindent\makebox[0mm][r]{\rm(\arabic{enumi})}}
\newcommand{\ara}{\operatorname{ara}}
\newcommand{\Ann}{\operatorname{Ann}}
\newcommand{\Ass}{\operatorname{Ass}}
\newcommand{\depth}{\operatorname{depth}}
\newcommand{\V}{\operatorname{V}}
\newcommand{\Ext}{\operatorname{Ext}}
\newcommand{\Hom}{\operatorname{Hom}}
\newcommand{\height}{\operatorname{height}}
\newcommand{\cd}{\operatorname{cd}}
\newcommand{\Max}{\operatorname{Max}}
\newcommand{\rad}{\operatorname{rad}}
\newcommand{\grade}{\operatorname{grade}}
\newcommand{\Spec}{\operatorname{Spec}}
\newcommand{\Supp}{\operatorname{Supp}}
\def\1{{\mathbf 1}}
\def\fa{{\mathfrak a}}
\def\fb{{\mathfrak b}}
\def\fm{{\mathfrak m}}
\def\fp{{\mathfrak p}}
\def\<{{\langle}}
\def\>{{\rangle}}
\definecolor{myc}{cmyk}{0.3,0.5,1,0}
\newcommand{\excise}[1]{}
\begin{document}

\title[Relative Cohen-Macaulay filtered modules]{RELATIVE COHEN-MACAULAY FILTERED MODULES WITH A VIEW TOWARD RELATIVE COHEN-MACAULAY MODULES}
 
\address{Department of Mathematics, Payame Noor University, Tehran, 19395-3697, Iran.} 
\email{zohouri@phd.pnu.ac.ir}

\author[Mast Zohouri, Ahmadi Amoli and Faramarzi]{M. Mast Zohouri, Kh. Ahmadi Amoli$^{*}$ and S. O. Faramarzi}

\address{Department of Mathematics, Payame Noor University, Tehran, 19395-3697, Iran.}
\email{khahmadi@pnu.ac.ir}

\address{Department of Mathematics, Payame Noor University, Tehran, 19395-3697, Iran.}
\email{faramarzi@tabrizu.ac.ir}

\subjclass[2010]{13D45, 13E05, 13C14}

\keywords{Cohomological dimension, Filter regular sequences, Local cohomology, Relative Cohen-Macaulay filtered modules}

\thanks{{\scriptsize 
$*$Corresponding author }}

%%%%%%%%%%%%%%%%%%%%%%%%%%%%%%%%%%%%%%%%%%%%%%%%%%%%%%%%%%%%%%%%%%%%%%%%%%%%%%%%%%%%%%%%%
\begin{abstract}
Let $R$ be a commutative Noetherian ring, $\fa$ a proper ideal of $R$ and $M$ a finite $R$-module. It is shown that, if $(R,\fm)$ is a complete local ring, then under certain conditions $\fa$ contains a regular element on $D_{R}(H^{c}_{\fa}(M))$, where $c=\cd(\fa,M)$. A non-zerodivisor characterization of relative Cohen-Macaulay modules w.r.t $\fa$ is given. We introduce the concept of relative Cohen-Macaulay filtered modules w.r.t $\fa$ and study some basic properties of such modules. In paticular, we provide a non-zerodivisor characterization of relative Cohen-Macaulay filtered modules w.r.t $\fa$. Furthermore, a characterization of cohomological dimension filtration of $M$ by the associated prime ideals of its factors is established. As a consequence, we present a cohomological dimension filtration for those modules whose zero submodule has a primary decomposition. Finally, we bring some new results about relative Cohen-Macaulay modules w.r.t $\fa$. 
\end{abstract}

\maketitle
\vspace{-2ex}%
\setcounter{tocdepth}{1}%
%\thispagestyle{empty}%

%%%%%%%%%%%%%%%%%%%%%%%%%%%%%%%%%%%%%%%%%%%%%%%%%%%%%%%%%%%%%%%%%%%%%%%%%%%%%%%%%%%%%%%%%%%
\section{Introduction }%%%%%%%%%%%%%%%%%%%%%%%%%%%%%%%%%%%%%%%%%%%%%%%%%%%%%%%%%%%%%%%%%%%%%%%%%%
%%%%%%%%%%%%%%%%%%%%%%%%%%%%%%%%%%%%%%%%%%%%%%%%%%%%%%%%%%%%%%%%%%%%%%%%%%%%%%%%%%%%%%%%%%%
Throughout this paper, let $R$ denote a commutative Noetherian ring with identity and $\fa$ a proper ideal of $R$. For any non-zero $R$-module $M$, the ith local cohomology module of $M$ is defined as 
$$H^{i}_{\fa}(M):=\lim_{\underset{n\geq 1}{\longrightarrow}} \Ext^{i}_{R}(R/{\fa^n},M).$$
$\V(\fa)$ denotes the set of all prime ideals of $R$ containing $\fa$. For an $R$-module $M$, the {\it cohomological dimension} of $M$ with respect to $\fa$ is defined as $\cd(\fa,M):=\sup \{i\in \mathbb{Z}\mid H^{i}_{\fa}(M)\neq 0\}$ which is known that for a local ring $(R,\fm)$ and $\fa=\fm$, this is equal to the dimension of $M$. For unexplained notation and terminology about local cohomology modules, we refer the reader to \cite{BH} and \cite{BSH}. The notion of {\it cohomological dimension filtration} (abbreviated as {\it cd-filtration}) of $M$ introduced by A. Atazadeh and et al. \cite{ASN} which is a generalization of the concept of dimension filtration that is defined by P. Schenzel \cite{Sch} in local case. For any integer $0\leq i\leq \cd(\fa,M)$, let $M_{i}$ denote the largest submodule of $M$ such that $\cd(\fa,M_{i})\leq i$. Because of the maximal condition of a Noetherian $R$-module the submodules $M_{i}$ of $M$ are well-defined. Moreover, it follows that $M_{i-1}\subseteq M_{i}$ for all $1\leq i\leq\cd(\fa,M)$. In the present article, we will use the concept of relative Cohen-Macaulay modules. An $R$-module $M$ is {\it relative Cohen-Macaulay} w.r.t $\fa$ whenever $H^{i}_{\fa}(M)=0$ for all $i\neq\height_{M}(\fa)$. In other words, $M$ is relative Cohen-Macaulay w.r.t $\fa$ if and only if $\grade(\fa,M)=\cd(\fa,M)$ (see \cite{Z}). Notice that this concept has a connection with a notion which has been studied under the title of {\it cohomologically complete intersection ideals} in \cite{HSch}.  It is well-known that $\height_R(\fa)\leq\cd(\fa,R)\leq\ara(\fa)$. The ideal $\fa$ is called a set-theoretic complete intersection ideal whenever $\height_R \fa=\ara(\fa)$. A set-theoretic complete intersection ideal is a cohomologically complete intersection ideal. Recently, relative Cohen-Macaulay modules have been studied also in \cite{HS}.\\
Sharp \cite{SH} and some other authors have shown that a Cohen-Macaulay local ring $R$ admits a canonical module if and only if it is the homomorphic image of a Gorenstein local ring. In particular, if $R$ is a complete Cohen-Macaulay local ring of dimension $n$, then $w_{R}=\Hom_{R}(H^{n}_{\fm}(R),E(R/ \fm))$ is a canonical module of $R$. 
The outline of the paper is as follows.\\
Section 2 is devoted to discuss main topics of this paper. We initiate this section by showing that if $(R,\fm)$ is a complete local ring and $M$ is relative Cohen-Macaulay w.r.t $\fa$ with $\cd(\fa,M)=c>0$, and $\Supp_{R}(H^{c}_{\fa}(M))\subseteq\V(\fm)$, then $\fa$ contains a regular element on $D_{R}(H^{c}_{\fa}(M))$ (see Corollary~\ref{co}). In this horizon, we prove the following theorem (see Theorem~\ref{p3}).

\begin{thm}\label{p0}
Let $(R,\fm)$ be a local ring and let $M$ be a finite $R$-module with $\cd(\fa,M)=c>0$. Assume that 
$\underline{x}=x_{1},\ldots ,x_{n}\in\fa$ is a regular sequence on both $M$ and $D_R(H^{c}_{\fa}(M))$. Then $$\cd(\fa,M/\underline{x}M)=\cd(\fa,M)-n.$$
\end{thm}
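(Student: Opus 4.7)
My plan is to proceed by induction on $n$.

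For the base case $n=1$, I use the short exact sequence $0\to M \xrightarrow{x_1} M \to M/x_1M \to 0$ and take its long exact sequence of local cohomology. Since $H^{c+1}_\fa(M)=0$, this yields $H^c_\fa(M/x_1M)\cong H^c_\fa(M)/x_1H^c_\fa(M)$, whose Matlis dual equals $(0:_{D_R(H^c_\fa(M))} x_1)=0$ by the regularity hypothesis; faithfulness of $D_R$ then forces $H^c_\fa(M/x_1M)=0$, so $\cd(\fa,M/x_1M)\leq c-1$. For the lower bound, the long exact sequence at degree $c-1$ provides a surjection $H^{c-1}_\fa(M/x_1M)\twoheadrightarrow (0:_{H^c_\fa(M)} x_1)$. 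Since $H^c_\fa(M)\neq 0$ is $\fa$-torsion and $x_1\in\fa$, every nonzero element is annihilated by some power of $x_1$, so $x_1$ cannot be a nonzerodivisor on $H^c_\fa(M)$; hence $(0:_{H^c_\fa(M)} x_1)\neq 0$ and $\cd(\fa,M/x_1M)\geq c-1$.

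For the inductive step, assume the theorem for sequences of length $n-1$. I apply the inductive hypothesis to the initial segment $x_1,\ldots,x_{n-1}$, which remains a regular sequence both on $M$ and on $D_R(H^c_\fa(M))$ (any initial segment of a regular sequence is regular). This gives $\cd(\fa,M')=c-n+1$ where $M':=M/(x_1,\ldots,x_{n-1})M$. I then apply the base-case argument to the pair $(M',x_n)$: since $x_n$ is $M'$-regular by hypothesis, the only remaining condition to verify is that $x_n$ is regular on $D_R(H^{c-n+1}_\fa(M'))$.

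This last verification is the main obstacle. My strategy is to track the Matlis duals $D_R(H^{c-i}_\fa(M_i))$ (with $M_i:=M/(x_1,\ldots,x_i)M$) through the short exact sequences $0\to M_{i-1}\xrightarrow{x_i} M_{i-1}\to M_i\to 0$. Each such sequence, after dualizing the associated long exact sequence of local cohomology and using that $\cd(\fa,M_{i-1})=c-i+1$ from the preceding inductive stage, yields
$$0\to D_R(H^{c-i+1}_\fa(M_{i-1}))/x_iD_R(H^{c-i+1}_\fa(M_{i-1}))\to D_R(H^{c-i}_\fa(M_i))\to (0:_{D_R(H^{c-i}_\fa(M_{i-1}))} x_i)\to 0.$$
Iteratively, this embeds $D_R(H^c_\fa(M))/(x_1,\ldots,x_i)D_R(H^c_\fa(M))$ as a submodule of $D_R(H^{c-i}_\fa(M_i))$, on which $x_{i+1}$ acts as a nonzerodivisor by the hypothesis. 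The snake lemma applied to multiplication by $x_{i+1}$, combined with the Koszul-homological reformulation $H_j(\underline{x};H^c_\fa(M))=0$ for $j<n$ (obtained by Matlis-dualizing the regular sequence assumption on $D_R(H^c_\fa(M))$ and invoking the self-duality of the Koszul complex), should propagate the nonzerodivisor behavior through each extension, ultimately delivering the required regularity of $x_n$ on $D_R(H^{c-n+1}_\fa(M_{n-1}))$ at stage $i=n-1$, closing the induction.
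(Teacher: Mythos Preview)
Your base case $n=1$ is correct and essentially identical to the paper's argument: both use the long exact sequence to identify $H^c_{\fa}(M/x_1M)$ with the cokernel of $x_1$ on $H^c_{\fa}(M)$, dualize, and use regularity of $x_1$ on $D_R(H^c_{\fa}(M))$ to kill it; both obtain the lower bound from the fact that no element of $\fa$ can act injectively on a nonzero $\fa$-torsion module. (The paper first passes to the completion to invoke the equality of annihilators of a module and its Matlis dual, whereas you appeal to faithfulness of $D_R$ directly; either is fine.)

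The inductive step, however, contains a genuine gap --- and it is worth noting that the paper's own proof is no better here, consisting of the single sentence ``the claim will be proved by using inductive hypothesis.'' You correctly isolate the obstacle: to apply the $n=1$ case to $(M',x_n)$ with $M'=M/(x_1,\dots,x_{n-1})M$, one must know that $x_n$ is regular on $D_R\bigl(H^{c-n+1}_{\fa}(M')\bigr)$, and this is \emph{not} part of the hypothesis. Your proposed remedy does not close the gap. The exact sequence you write down,
\[
0\to D_R(H^{c-i+1}_{\fa}(M_{i-1}))/x_i\,D_R(H^{c-i+1}_{\fa}(M_{i-1}))\to D_R(H^{c-i}_{\fa}(M_i))\to \bigl(0:_{D_R(H^{c-i}_{\fa}(M_{i-1}))}x_i\bigr)\to 0,
\]
has a right-hand term built from $H^{c-i}_{\fa}(M_{i-1})$, which after unwinding involves the \emph{lower} local cohomology modules $H^{c-1}_{\fa}(M),H^{c-2}_{\fa}(M),\dots$ of $M$. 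The hypotheses say nothing about these. The Koszul vanishing $H_j(\underline{x};H^c_{\fa}(M))=0$ for $j<n$ that you derive is correct and does control the \emph{left}-hand submodule through all stages (indeed it shows that $x_{i+1},\dots,x_n$ is regular on $D_R(H^c_{\fa}(M))/(x_1,\dots,x_i)$), but it gives no leverage on the right-hand quotient, and regularity on a submodule does not propagate to an extension. Already for $n=2$ one needs $H^{c-1}_{\fa}(M_1)/x_2H^{c-1}_{\fa}(M_1)=0$; your argument handles the piece coming from $(0:_{H^c_{\fa}(M)}x_1)$ but not the piece coming from $H^{c-1}_{\fa}(M)/x_1H^{c-1}_{\fa}(M)$. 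The phrase ``should propagate'' is where the proof stops being a proof.
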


As an application of Theorem~\ref{p0}, we bring the next result (see Corollary~\ref{c3}).
\begin{cor}
Let $(R,\fm)$ be a local ring and $M$ be a finite $R$-module with $\cd(\fa,M)=c>0$.
\begin{itemize}
\item[(i)] Let $x\in\fa$ be a regular element on both $M$ and $D_R(H^{c}_{\fa}(M))$. Then $M$ is relative Cohen-Macaulay w.r.t $\fa$ if and only if $M/xM$ is relative Cohen-Macaulay w.r.t $\fa$.
\item[(ii)] Let $\underline{x}=x_{1},\ldots ,x_{n}\in\fa$ be a regular sequence on both $M$ and $D_R(H^{c}_{\fa}(M))$. Then $M$ is relative Cohen-Macaulay w.r.t $\fa$ if and only if $M/ \underline{x}M$ is relative Cohen-Macaulay w.r.t $\fa$.
\end{itemize}
\end{cor}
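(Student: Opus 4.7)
The plan is to deduce both parts from Theorem~\ref{p0} combined with a standard grade identity and the definitional characterization of relative Cohen-Macaulayness. Recall that an $R$-module $N$ is relative Cohen-Macaulay w.r.t.\ $\fa$ precisely when $\grade(\fa,N) = \cd(\fa,N)$.

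Part (ii) is the main statement, and part (i) is just the case $n=1$. To prove (ii), I would first invoke the classical identity
$$\grade(\fa, M/\underline{x}M) \;=\; \grade(\fa, M) - n,$$
valid for any $M$-regular sequence $\underline{x} = x_1,\ldots,x_n \in \fa$. This is obtained by induction on $n$, the base case coming from the long exact sequence of $\Ext^{\bullet}_R(R/\fa,-)$ applied to $0 \to M \xrightarrow{x_1} M \to M/x_1 M \to 0$, which shifts the vanishing degree of $\Ext^{i}_R(R/\fa,-)$ by one. Next, Theorem~\ref{p0} applied to the same sequence $\underline{x}$ yields
$$\cd(\fa, M/\underline{x}M) \;=\; \cd(\fa, M) - n.$$
Subtracting these two equalities gives
$$\cd(\fa, M/\underline{x}M) - \grade(\fa, M/\underline{x}M) \;=\; \cd(\fa, M) - \grade(\fa, M),$$
so the left-hand side vanishes if and only if the right-hand side does. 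By the definition recalled above, this is exactly the biconditional asserted in (ii). Part (i) then follows by taking $n=1$.

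I do not expect a serious obstacle, since Theorem~\ref{p0} already supplies the nontrivial half (the cohomological dimension formula) and the grade identity is standard. The only point to watch is that if one preferred to prove (ii) by iterating (i) rather than by a single application of Theorem~\ref{p0}, one would have to verify at each step that the hypothesis $\cd(\fa,-) > 0$ persists and that the truncated tails $x_{i+1},\ldots,x_n$ remain regular on $D_R$ of the new top local cohomology module. This bookkeeping is avoided entirely by applying Theorem~\ref{p0} once to the full sequence $\underline{x}$, as above.
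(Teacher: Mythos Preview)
Your argument is correct and rests on exactly the same two ingredients as the paper's proof: Theorem~\ref{p0} for the cohomological-dimension drop and the standard identity $\grade(\fa,M/\underline{x}M)=\grade(\fa,M)-n$. The only difference is organizational: the paper proves (i) first and then derives (ii) by induction on (i), whereas you apply Theorem~\ref{p0} once to the full sequence and obtain (i) as the case $n=1$. Your route is in fact the tidier one, since---as you correctly observe---iterating (i) would require checking at each step that $x_{i+1}$ is regular on $D_R\bigl(H^{c-i}_{\fa}(M/(x_1,\ldots,x_i)M)\bigr)$, a verification the paper suppresses.
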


We define the {\it cohomological deficiency modules} of $M$ as the matlis duality of $H^{i}_{\fa}(M)$ for $i\neq \cd(\fa,M)$, denoted by $K^{i}_{\fa}(M)$ and further {\it cohomological canonical module} of $M$ if $i=\cd(\fa,M)$.
One of the main purposes of this section is to introduce a new class of modules over $R$ called {\it relative Cohen-Macaulay filtered modules} ({\it or relative sequentially Cohen-Macaulay modules}) w.r.t $\fa$, abbreviated by RCMF modules. This is an extension of the concept of {\it Cohen-Macaulay filtered modules} ({\it sequentially Cohen-Macaulay modules}) introduced by P. Schenzel \cite{Sch} for local case. The concept of sequentially Cohen-Macaulay modules was introduced by Stanley \cite{S} for graded modules. It is interesting that any relative Cohen-Macaulay $R$-module w.r.t $\fa$ is an RCMF $R$-module w.r.t $\fa$. But, any RCMF $R$-module w.r.t $\fa$ is not a relative Cohen-Macaulay $R$-module w.r.t $\fa$ necessarily.
We derive permanence properties of RCMF modules. More precisely, basic properties of RCMF modules with respect to non-zerodivisors, localization and completion are discussed. One of the main results of this section is the following theorem (see Theorem~\ref{Th1}). 

\medskip
\begin{thm}
Let $(R,\fm)$ be a local ring and $M$ be a finite $R$-module with the cd-filtration $\mathcal{M}=\{M_{i}\}^{c}_{i=0}$ where $c=\cd(\fa,M)$.
\begin{itemize}
\item[(i)] Let $x\in\fa$ be a regular element on $M$, $D_R(H^{c}_{\fa}(M))$, and $D_R(H^{i}_{\fa}(M_i))$ for all $0\leq i\leq c$. Then $M$ is an RCMF module w.r.t $\fa$ if and only if $M/xM$ is an RCMF module w.r.t $\fa$.
\item[(ii)] Let $\underline{x}=x_{1},\ldots ,x_{n}\in\fa$ be a regular sequence on $M$,  $D_R(H^{c}_{\fa}(M))$, and $D_R(H^{i}_{\fa}(M_i))$ for all $0\leq i\leq c$. Then $M$ is an RCMF module w.r.t $\fa$ if and only if $M/ \underline{x}M$ is an RCMF module w.r.t $\fa$.
\end{itemize}
\end{thm}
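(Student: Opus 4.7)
The plan is to prove (i) first and deduce (ii) by induction on $n$, checking at each step that $x_{j+1},\ldots,x_n$ remain regular on $M/(x_1,\ldots,x_j)M$ and on the Matlis duals attached to the cd-filtration of that quotient. I describe the argument for (i). Since $x\in\fa$ is $M$-regular we have $\Gamma_{\fa}(M)=0$; as any finitely generated $N$ with $\cd(\fa,N)\le 0$ is $\fa$-torsion (a consequence of $\grade(\fa,-)\le\cd(\fa,-)$ together with Nakayama in the local setting), this forces $M_{0}=0$. For each $1\le i\le c$, $x$ is regular on $M_i$ (a submodule of $M$), regular on $M/M_{i-1}$ (apply the snake lemma to $x$-multiplication on $0\to M_{i-1}\to M\to M/M_{i-1}\to 0$), and hence regular on $M_i/M_{i-1}$. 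Applying the long exact local cohomology sequence to $0\to M_{i-1}\to M_i\to M_i/M_{i-1}\to 0$ and using $H^{i}_{\fa}(M_{i-1})=H^{i+1}_{\fa}(M_{i-1})=0$ (since $\cd(\fa,M_{i-1})\le i-1$) yields $H^{i}_{\fa}(M_i)\cong H^{i}_{\fa}(M_i/M_{i-1})$; applying the Matlis duality functor and invoking the hypothesis on $D_R(H^{i}_{\fa}(M_i))$ shows that $x$ is regular on $D_R(H^{i}_{\fa}(M_i/M_{i-1}))$ for every $i$.

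For the forward direction, suppose $M$ is RCMF. Each nonzero factor $M_i/M_{i-1}$ is then relative Cohen--Macaulay w.r.t. $\fa$ of cohomological dimension $i$, and Corollary~\ref{c3}(i) yields that $(M_i/M_{i-1})/x(M_i/M_{i-1})$ is relative Cohen--Macaulay w.r.t. $\fa$ of cohomological dimension $i-1$. Since $x$ is $(M/M_i)$-regular, $M_i\cap xM=xM_i$, so $\overline{M_i}:=(M_i+xM)/xM\cong M_i/xM_i$; the snake lemma applied to $0\to M_{i-1}\to M_i\to M_i/M_{i-1}\to 0$ produces
\[
0\to\overline{M_{i-1}}\to\overline{M_i}\to (M_i/M_{i-1})/x(M_i/M_{i-1})\to 0.
\]
Thus $\{\overline{M_i}\}_{i=1}^{c}$ is a filtration of $M/xM$ with relative Cohen--Macaulay factors of strictly increasing cohomological dimensions $0,1,\dots,c-1$. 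I then identify this filtration with the cd-filtration of $M/xM$, using the associated-prime characterisation of cd-filtrations established earlier in Section~2, to conclude that $M/xM$ is RCMF.

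For the backward direction, let $\{N_j\}_{j=0}^{c-1}$ be the cd-filtration of $M/xM$; by the same identification $N_{i-1}=\overline{M_i}$, so $N_{i-1}/N_{i-2}\cong (M_i/M_{i-1})/x(M_i/M_{i-1})$ is relative Cohen--Macaulay of cohomological dimension $i-1$ under the RCMF assumption on $M/xM$. The reverse implication in Corollary~\ref{c3}(i), applicable because $x$ is regular on $M_i/M_{i-1}$ and on $D_R(H^{i}_{\fa}(M_i/M_{i-1}))$, then yields that $M_i/M_{i-1}$ is itself relative Cohen--Macaulay w.r.t. $\fa$ of cohomological dimension $i$, so $M$ is RCMF.

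The main obstacle will be the identification $\overline{M_i}=N_{i-1}$, that is, proving that the submodule chain in $M/xM$ built from the cd-filtration of $M$ really is the cd-filtration of $M/xM$. The maximality built into the definition of the cd-filtration is not captured by the short exact sequences alone, and will almost certainly have to be extracted from the associated-prime characterisation of cd-filtrations announced in the abstract.
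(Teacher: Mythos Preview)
Your forward implication is the paper's argument: establish $M_0=0$, check that $x$ is regular on each $\mathcal{M}_i=M_i/M_{i-1}$, obtain $\overline{M_i}/\overline{M_{i-1}}\cong\mathcal{M}_i/x\mathcal{M}_i$, and apply Corollary~\ref{c3}(i). Two small remarks. First, your snake-lemma derivation of ``$x$ regular on $M/M_{i-1}$'' from $0\to M_{i-1}\to M\to M/M_{i-1}\to 0$ alone does not work: it only yields an injection $(0:_{M/M_{i-1}}x)\hookrightarrow M_{i-1}/xM_{i-1}$. The paper instead appeals to $\Ass_R(\mathcal{M}_i)\subseteq\Ass_R(M)$ from \cite[Proposition~2.6]{ASN} to get $x$ regular on $\mathcal{M}_i$ directly, and the intersection identities $M_i\cap xM=xM_i$ follow from that. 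Second, to identify $\{\overline{M_i}\}$ with the cd-filtration of $M/xM$ the paper invokes Proposition~\ref{prop1} (any relative Cohen--Macaulay filtration coincides with the cd-filtration), which is more direct than Theorem~\ref{Th4} here since you already have relative Cohen--Macaulay factors and need not compute associated primes.

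Your backward implication has a genuine gap exactly where you suspect. The identity $N_{i-1}=\overline{M_i}$ in the forward direction came from Proposition~\ref{prop1}, which applied only because $\{\overline{M_i}\}$ had already been shown to be a relative Cohen--Macaulay filtration of $M/xM$. In the converse you know nothing of the sort about $\mathcal{M}_i/x\mathcal{M}_i$, so Proposition~\ref{prop1} is unavailable; and Theorem~\ref{Th4} would require $\Ass_R(\mathcal{M}_i/x\mathcal{M}_i)=\Ass_R^{\,i-1}(M/xM)$, which is not a formal consequence of $\Ass_R(\mathcal{M}_i)=\Ass_R^{\,i}(M)$ and is not delivered by anything in Section~2. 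As written, the plan for the converse is circular.

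The paper handles the converse by transporting in the opposite direction. It takes the cd-filtration $\{M'_j\}_{j=0}^{c-1}$ of $M/xM$ and \emph{pulls it back}, setting $M_{i+1}:=\pi^{-1}(M'_i)$ and $M_0:=0$, where $\pi:M\to M/xM$. Since each such $M_i$ (for $i\ge 1$) contains $xM$, the same isomorphism chain used in the forward step gives $\mathcal{M}_i/x\mathcal{M}_i\cong M'_{i-1}/M'_{i-2}$, which is relative Cohen--Macaulay of cohomological dimension $i-1$ by the RCMF hypothesis on $M/xM$. Theorem~\ref{p3} and Corollary~\ref{c3} then lift this to relative Cohen--Macaulayness of $\mathcal{M}_i$ with $\cd(\fa,\mathcal{M}_i)=i$, producing a relative Cohen--Macaulay filtration of $M$, and Proposition~\ref{prop1} (implicitly) identifies it with the cd-filtration of $M$. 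Going through preimages thus bypasses any need to prove a maximality statement for the image filtration in $M/xM$.
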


\medskip
As another main result of this section, we provide a necessary and sufficient condition for a filtration to be cd-filtration of a module by the associated prime ideals of its factors (see Theorem~\ref{Th4}). 
\begin{thm}
Let $\mathcal{M}=\{M_i\}_{i=0}^{c}$ be a filtration of the finite $R$-module $M$ and $\cd(\fa,M_{0})=0$. The following conditions are equivalent:
\begin{itemize}
\item[(i)] $\Ass(M_{i}/M_{i-1})=\Ass^{i}(M)$ for all $1\leq i\leq c$;
\item[(ii)] $\mathcal{M}$ is the cd-filtration of $M$.
\end{itemize}
\end{thm}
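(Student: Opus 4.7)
The plan is to prove both directions by systematic use of the standard identity
\[
\cd(\fa,N)=\max\{\cd(\fa,R/\fp):\fp\in\Ass(N)\}
\]
valid for any finite $R$-module $N$ (it follows from $\cd(\fa,N)=\cd(\fa,R/\Ann_R N)$ and the embedding of $R/\sqrt{\Ann_R N}$ into $\bigoplus R/\fp$ over $\fp\in\Min\Ass(N)$), together with the maximality property defining the cd-filtration. I take $\Ass^{i}(M)=\{\fp\in\Ass(M):\cd(\fa,R/\fp)=i\}$, which is the reading suggested by the surrounding text.

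For (ii)$\Rightarrow$(i), fix $1\leq i\leq c$ and prove two inclusions. For $\Ass^{i}(M)\subseteq\Ass(M_{i}/M_{i-1})$, I would pick $x\in M$ with $\Ann(x)=\fp\in\Ass^{i}(M)$; then $Rx\cong R/\fp$ has cohomological dimension $i$, so the maximality of $M_{i}$ forces $Rx\subseteq M_{i}$, while the maximality of $M_{i-1}$ rules out $x\in M_{i-1}$. Primeness of $\fp$ then yields $\Ann(\bar x)=\fp$, since any $r\in\Ann(\bar x)\setminus\fp$ would give $Rrx\cong R/\fp\hookrightarrow M_{i-1}$, contradicting $\cd(\fa,M_{i-1})\leq i-1$. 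For the reverse inclusion, given $\fp\in\Ass(M_{i}/M_{i-1})$, I would lift $R/\fp\hookrightarrow M_{i}/M_{i-1}$ to a submodule $N\subseteq M_{i}$ with $N/M_{i-1}\cong R/\fp$ and analyse
\[
0\to M_{i-1}\to N\to R/\fp\to 0.
\]
The injection $R/\fp\hookrightarrow M_{i}/M_{i-1}$ gives $\cd(\fa,R/\fp)\leq i$; a strict inequality would force $\cd(\fa,N)\leq i-1$, violating the maximality of $M_{i-1}$. Finally, if $\fp\notin\Ass(N)$, the sequence above gives $\Ass(N)=\Ass(M_{i-1})$, whence the displayed formula again produces $\cd(\fa,N)\leq i-1$, the same contradiction. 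Hence $\fp\in\Ass(N)\subseteq\Ass(M)$ and $\cd(\fa,R/\fp)=i$, so $\fp\in\Ass^{i}(M)$.

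For (i)$\Rightarrow$(ii) I have to show $M_{i}$ is the largest submodule of cd at most $i$. The bound $\cd(\fa,M_{i})\leq i$ would follow by induction on $i$: the base case is the hypothesis $\cd(\fa,M_0)=0$, and the inductive step uses the short exact sequence together with $\cd(\fa,M_{i}/M_{i-1})=\max_{\fp\in\Ass^{i}(M)}\cd(\fa,R/\fp)\leq i$. For maximality, given $N\subseteq M$ with $\cd(\fa,N)\leq i$, the module $(N+M_{i})/M_{i}$ is a quotient of $N$, hence has cd at most $i$. Iterating the sequences $0\to M_{j-1}/M_i\to M_j/M_i\to M_j/M_{j-1}\to 0$ for $i<j\leq c$ and invoking (i) yields $\Ass(M/M_{i})\subseteq\bigcup_{j>i}\Ass^{j}(M)$, so every associated prime of any submodule of $M/M_{i}$ has $\cd(\fa,R/\cdot)>i$; the displayed formula then forces $(N+M_{i})/M_{i}=0$, i.e.\ $N\subseteq M_{i}$.

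The main obstacle is the subtle step in (ii)$\Rightarrow$(i) showing that a prime merely associated to the subquotient $M_{i}/M_{i-1}$ is actually associated to $M$, since $\Ass(C)\not\subseteq\Ass(A)\cup\Ass(B)$ for a general short exact sequence $0\to A\to B\to C\to 0$. The argument above, that the failure of $\fp\in\Ass(N)$ would collapse $\Ass(N)$ onto $\Ass(M_{i-1})$ and thereby drop $\cd(\fa,N)$ below $i$ in contradiction to maximality, is the cd-filtration-specific ingredient that makes this inclusion work.
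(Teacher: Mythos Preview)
Your argument is correct in both directions, but it differs from the paper's route. The paper dispatches (ii)$\Rightarrow$(i) in one line by citing \cite[Proposition~2.6(iii)]{ASN}, whereas you supply a self-contained proof; the key step you flagged---showing that $\fp\in\Ass(M_i/M_{i-1})$ actually lies in $\Ass(M)$ via the dichotomy $\Ass(N)\subseteq\Ass(M_{i-1})\cup\{\fp\}$ and maximality of $M_{i-1}$---is exactly what makes your direct approach work and is not present in the paper.

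For (i)$\Rightarrow$(ii) the strategies diverge as well. The paper first establishes $\Ass(M_{i-1})\cap\Ass(M_i/M_{i-1})=\emptyset$ by a descending contradiction (pushing a hypothetical common prime all the way down to $M_0$ and violating $\cd(\fa,M_0)=0$), and then argues top-down from $M_{c-1}$. You instead obtain $\cd(\fa,M_i)\leq i$ by a straightforward upward induction on the short exact sequences, and prove maximality by computing $\Ass(M/M_i)\subseteq\bigcup_{j>i}\Ass^j(M)$ directly from the filtration of $M/M_i$. Your approach is more uniform in $i$ and avoids the auxiliary disjointness claim; the paper's approach, on the other hand, needs no external input like the identity $\cd(\fa,N)=\max_{\fp\in\Ass N}\cd(\fa,R/\fp)$ that you invoke throughout (though that identity is entirely standard).
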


Atazadeh and et al. \cite[Theorem 1.1]{ASN} proved that if a finite $R$-module $M$ has a cd-filtration, then this filtration is uniquely determined by a reduced primary decomposition of the zero submodule in $M$. In the present paper, without such a condition on $M$, we present a cd-filtration for all $R$-modules whose zero submodule has a primary decomposition (see Corollary~\ref{c5}). 
\\
In section 3, we study relative Cohen-Macaulayness in rings and modules. In view of \cite[Proposition 2.3]{M}, it follows that if $(R,\fm)$ is a Cohen-Macaulay ring, then for a maximal Cohen-Macaulay non-zero module $M$, relative Cohen-Macaulayness of the ring $R$ and the module $M$ is equivalent, whenever $\Supp_R(M)=\Spec(R)$. We determine equivalency between two classes of relative Cohen-Macaulay rings and modules, ``multiplication" and ``semidualizing" modules, in Corollaries~\ref{c1} and ~\ref{cc}. Among other things, in Proposition~\ref{prop31}, comparing with \cite[Proposition 5.1]{M}, we show that $R$ is relative Cohen-Macaulay w.r.t $\fa$ if and only if its canonical module is relative Cohen-Macaulay w.r.t $\fa$. 
Finally, as any relative Cohen-Macaulay module w.r.t $\fa$ is an RCMF module w.r.t $\fa$, if $(R,\fm)$ is a relative Cohen-Macaulay local ring w.r.t $\fa$ with $\cd(\fa,R)=c$, then for all $0\leq i\leq c$ the $R$-modules $K^{i}_{\fa}(R)$ are either zero or $i$-cohomological dimensional relative Cohen-Macaulay modules w.r.t $\fa$ (see Proposition~\ref{prop6}).

%%%%%%%%%%%%%%%%%%%%%%%%%%%%%%%%%%%%%%%%%%%%%%%%%
\section{Cohomological dimension filtration and relative Cohen-Macaulay filtered modules}
In this section, we define and study relative Cohen-Macaulay filtered modules. As the main objective, a characterization for such modules is presented in Theorem~\ref{Th1} and also a characterization of cd-filtration is presented in Theorem~\ref{Th4}. We begin by recalling the definition of cohomological dimension filtration due to Atazadeh and et al. in \cite{ASN} and the concept of relative Cohen-Macaulayness due to Zargar in \cite{Z}.

\begin{dfn}(see \cite{ASN}) 
Let $M$ be a finite $R$-module. The increasing filtration $\mathcal{M}=\{M_{i}\}^{c}_{i=0}$ of submodules of $M$, where $c=\cd(\fa,M)$ is called the {\it cohomological dimension filtration} of $M$ if for all integer $0\leq i\leq c$, $M_{i}$ is the largest submodule of $M$ such that $\cd(\fa,M_{i})\leq i$. 
\end{dfn}

\medskip
\begin{dfn}(see \cite{Z})
A finite $R$-module $M$ is called {\it relative Cohen-Macaulay} w.r.t $\fa$ if there is precisely one non-vanishing local cohomology module w.r.t $\fa$. Clearly, this is the case if and only if $\grade(\fa,M)=\cd(\fa,M)$. 
\end{dfn}

\medskip
These definitions motivate us to introduce the following concept.
\begin{dfn}\label{defin}
Let $M$ be a finite $R$-module and $\mathcal{M}=\{M_{i}\}^{c}_{i=0}$ be the cohomological dimension filtration of submodules of $M$, where $c=\cd(\fa,M)$. $M$ is called a {\it relative Cohen-Macaulay filtered module} ({\it relative sequentially Cohen-Macaulay module}) w.r.t $\fa$, whenever $\mathcal{M}_{i}={M_{i}}/{M_{i-1}}$ is either zero or an $i$-cohomological dimensional relative Cohen-Macaulay module w.r.t $\fa$ for all $1\leq i\leq c$. Let us abbreviate this notion by RCMF.
\end{dfn}

\medskip
Related to the definition of RCMF modules, we state the notion of {\it relative Cohen-Macaulay filtration} w.r.t $\fa$ which will be useful in the process.
\begin{dfn}
Let $M$ be a fininte $R$-module with $\cd(\fa,M)=c$. An increasing filtration ${\mathcal{C}}=\{C_{i}\}^{c}_{i=0}$ of submodules of $M$ is called a {\it relative Cohen-Macaulay filtration} of $M$ w.r.t $\fa$ whenever $C_{c}=M$ and ${\mathcal{C}_{i}=C_{i}/C_{i-1}}$ is either zero or an $i$-cohomological dimensional relative Cohen-Macaulay module w.r.t $\fa$ for all ${1\leq i\leq c}$.
\end{dfn}
\medskip
\begin{prop}\label{prop1}
Let ${\mathcal{C}}=\{C_{i}\}^{c}_{i=0}$ be the relative Cohen-Macaulay filtration of the $R$-module $M$ w.r.t $\fa$. Then ${\mathcal{C}}$ coinsides with the cohomological dimension filtration. 
\end{prop}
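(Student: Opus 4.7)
The plan is to verify both inclusions $C_i \subseteq M_i$ and $M_i \subseteq C_i$ for every $0 \le i \le c$, where $\{M_i\}$ denotes the cd-filtration of $M$. I would prove the first by upward induction on $i$ (extracting a $\cd(\fa,-)$ bound from the filtration and invoking maximality of $M_i$), and the second by downward induction on $i$, with the key ingredient being an $\fa$-unmixedness property of relative Cohen--Macaulay modules.

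For $C_i \subseteq M_i$, I would verify $\cd(\fa, C_i) \le i$ inductively. The base case $\cd(\fa, C_0) \le 0$ should come from interpreting $\mathcal{C}_0 = C_0$ as either zero or a $0$-cohomological dimensional relative CM module (the natural extension to $i=0$ of the condition in the definition; otherwise the proposition fails at $C_0$). For the inductive step, I would apply the long exact sequence of local cohomology to $0 \to C_{i-1} \to C_i \to \mathcal{C}_i \to 0$: the inductive hypothesis yields $H^k_{\fa}(C_{i-1}) = 0$ for $k \ge i$, and the relative CM hypothesis yields $H^k_{\fa}(\mathcal{C}_i) = 0$ for $k > i$, so $H^k_{\fa}(C_i) = 0$ for $k > i$, i.e.\ $\cd(\fa, C_i) \le i$. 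The maximality of $M_i$ then yields $C_i \subseteq M_i$.

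For $M_i \subseteq C_i$, I would use downward induction on $i$, starting from the trivial base $M_c = M = C_c$. Assuming $M_{i+1} = C_{i+1}$, I would consider the image $L := M_i/(M_i \cap C_i)$ of $M_i$ in $M/C_i$. Since $M_i \subseteq M_{i+1} = C_{i+1}$, the image lies inside $C_{i+1}/C_i = \mathcal{C}_{i+1}$. As a quotient of $M_i$, it satisfies $\cd(\fa, L) \le \cd(\fa, M_i) \le i$. If $\mathcal{C}_{i+1}=0$ then $L=0$ immediately; otherwise $\mathcal{C}_{i+1}$ is relative CM with $\cd(\fa, \mathcal{C}_{i+1}) = i+1$, and by the unmixedness lemma discussed below every $\fp \in \Ass(\mathcal{C}_{i+1}) \supseteq \Ass(L)$ satisfies $\cd(\fa, R/\fp) = i+1$. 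Thus if $L \ne 0$, picking any $\fp \in \Ass(L)$ gives an embedding $R/\fp \hookrightarrow L$ and hence $\cd(\fa, L) \ge \cd(\fa, R/\fp) = i+1$, contradicting $\cd(\fa, L) \le i$. Therefore $L = 0$ and $M_i \subseteq C_i$, closing the downward induction.

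The main obstacle will be the $\fa$-unmixedness lemma itself: if $N$ is a relative Cohen--Macaulay module w.r.t.~$\fa$ with $\cd(\fa, N) = d$, then every $\fp \in \Ass(N)$ satisfies $\cd(\fa, R/\fp) = d$. The upper bound $\cd(\fa, R/\fp) \le d$ is immediate from $R/\fp \hookrightarrow N$; the lower bound is the substantive part and should follow from the interplay between $\grade(\fa, N) = d$ (which forces $\fa \not\subseteq \fp$) and a formula of the form $\cd(\fa, M) = \sup\{\cd(\fa, R/\fq) : \fq \in \Supp M\}$, together with the fact that any $\fa$-regular sequence on $N$ stays regular on $R/\fp$ via the embedding. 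Once this lemma is available---either as a preliminary in the section or extracted from the results on relative Cohen--Macaulay modules in \cite{Z}---the two inductions combine to give $C_i = M_i$ for all $i$, proving that $\mathcal{C}$ coincides with the cd-filtration.
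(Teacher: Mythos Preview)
Your argument is correct. The downward induction hinges on precisely the $\fa$-unmixedness property that is Lemma~\ref{r3} in the paper (stated and proved shortly after this proposition, but logically independent of it), and the bound $\cd(\fa,L)\le i$ for the quotient $L$ of $M_i$ is justified since $\Supp L\subseteq\Supp M_i$ and $\cd(\fa,-)$ depends only on the support. The paper, however, takes a different route: it imports from \cite[Proposition~2.3]{ASN} the explicit description $M_i=H^{0}_{\fa_i}(M)$, where $\fa_i$ is the product of those $\fp\in\Ass_R M$ with $\cd(\fa,R/\fp)\le i$, and then shows that $C_i=H^{0}_{\fa_i}(M)$ as well, by checking that $H^{0}_{\fa_i}(C_j)\cong H^{0}_{\fa_i}(C_{j+1})$ for every $j\ge i$ because the relative Cohen--Macaulay quotient $\mathcal{C}_{j+1}$ carries no $\fa_i$-torsion. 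Your approach is more self-contained, working directly from the maximality definition of the cd-filtration and making the unmixedness lemma the visible pivot; the paper's approach is shorter once the structural result from \cite{ASN} is in hand, and avoids having to prove Lemma~\ref{r3} first. Your caveat about the base case is also on point: both arguments tacitly need $\cd(\fa,C_0)\le 0$, which the stated definition of a relative Cohen--Macaulay filtration does not literally impose.
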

\begin{proof} 
First, it is clear that $\cd(\fa,C_i)\leq i$ for all $0\leq i\leq c$. Also we have $$\Ass_R C_i=\{\fp\in\Ass_R M\mid \cd(\fa,R/\fp)\leq i\}$$ by \cite[Proposition 2.6]{ASN}. This implies that  $C_i=H^{0}_{\fa_i}(C_j)$ in which $\fa_i=\prod_{\cd(\fa,R/ \fp_{j})\leq i}\fp_{j}$ for all $0\leq i\leq c$. Now let $0\leq i\leq c$ and $j\geq i$. Consider the following exact sequence
$$0\longrightarrow C_j\longrightarrow C_{j+1}\longrightarrow \mathcal{C}_{j+1}\longrightarrow 0.$$
As $\mathcal{C}_{j+1}$ is either zero or $(j+1)$-cohomological dimensional relative Cohen-Macaulay module w.r.t $\fa$, it follows that $H^{0}_{\fa_i}(C_j)\cong H^{0}_{\fa_i}(C_{j+1})$. Thus $C_i=H^{0}_{\fa_i}(C_j)$ for all $j\geq i$. Also since $M=C_c$, the proof will be completed by \cite[Proposition 2.3]{ASN}.
\end{proof} 

\begin{rem}\label{R2}
Let $\mathcal{M}=\{M_{i}\}^{c}_{i=0}$ be the cd-filtration of $M$ where $c=\cd(\fa,M)$. Let $1\leq i\leq c$. Considering the exact sequence $0\longrightarrow M_{i-1}\longrightarrow M_{i}\longrightarrow M_{i}/M_{i-1}\longrightarrow 0$, we have 
$$\cd(\fa,M_{i})=\max\{\cd(\fa,M_{i-1}), \cd(\fa,M_{i}/M_{i-1})\}$$
by \cite[Corollary 2.3 (i)]{DNT}. Thus $\cd(\fa,M_{i})=\cd(\fa,M_{i}/M_{i-1})$ for all $1\leq i\leq c$.
\end{rem}

\medskip
Now recall that the {\it finiteness dimension} of $M$ relative to $\fa$, $f_{\fa}(M)$, is defined by 
$$
f_{\fa}(M):=\inf\{i\in\mathbb{N}_{0}: H^{i}_{\fa}(M)\ \text{is not finitely generated}\}.
$$
Another formulation is the {\it $\fb$-finiteness dimension} of $M$ relative to $\fa$ which is defined by 
$$
f_{\fa}^{\fb}(M):=\inf\{i\in\mathbb{N}_{0}: \fb\nsubseteq\rad (0:_{R}H^{i}_{\fa}(M))\}.
$$
Moreover, {\it the $\fb$-minimum $\fa$-adjusted depth} of $M$, denoted by $\lambda_{\fa}^{\fb}(M)$, is defined by 
$$
\lambda_{\fa}^{\fb}(M):=\inf\{\depth M_{\fp}+\height (\fa+\fp)/\fp: \fp\in\Spec(R)\setminus\V(\fb)\},
$$
where $\fb$ is the second ideal of $R$ without assuming $\fb\subseteq\fa$ in general. By convention, the infimum of the empty set of integers is interpreted by $\infty$.
\medskip
\begin{lem}\label{r3}
Let $M$ be a relative Cohen-Macaulay $R$-module w.r.t $\fa$ with $\cd(\fa,M)>0$. Then
$$
\cd(\fa,M)=\cd(\fa,R/\fp)\ \text{for all}\ \fp\in\Ass_{R}(M).
$$
\end{lem}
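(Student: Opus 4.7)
The plan is to split the desired equality into the two inequalities $\cd(\fa,R/\fp)\le c$ and $\cd(\fa,R/\fp)\ge c$ and handle them separately.

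For the upper bound, the membership $\fp\in\Ass_R(M)$ provides an embedding $R/\fp\hookrightarrow M$, so $\Supp(R/\fp)\subseteq\Supp(M)$. Combined with the support characterization
\[
\cd(\fa,M)=\sup\{\cd(\fa,R/\fq)\mid \fq\in\Supp(M)\}
\]
(which follows by taking a prime filtration $0=L_{0}\subsetneq L_{1}\subsetneq\cdots\subsetneq L_{n}=M$ with $L_{i}/L_{i-1}\cong R/\fq_{i}$ and iterating the long-exact-sequence bound $\cd(\fa,L_{i})\le\max\{\cd(\fa,L_{i-1}),\cd(\fa,R/\fq_{i})\}$), this immediately gives $\cd(\fa,R/\fp)\le c$.

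For the lower bound, put $N:=R/\fp$ regarded as a submodule of $M$ and feed the short exact sequence $0\to N\to M\to M/N\to 0$ into the long exact sequence of $H^{\bullet}_{\fa}(-)$. The relative Cohen--Macaulay hypothesis kills $H^{i}_{\fa}(M)$ for every $i\ne c$, collapsing that long exact sequence to the isomorphisms
\[
H^{i-1}_{\fa}(M/N)\cong H^{i}_{\fa}(N)\qquad(1\le i\le c-1)
\]
together with the four-term piece
\[
0\to H^{c-1}_{\fa}(M/N)\to H^{c}_{\fa}(N)\to H^{c}_{\fa}(M)\to H^{c}_{\fa}(M/N)\to 0.
\]
Assuming toward contradiction that $\cd(\fa,R/\fp)=c'<c$, so $H^{i}_{\fa}(N)=0$ for $i>c'$, these displays force $H^{c-1}_{\fa}(M/N)=0$, $H^{c}_{\fa}(M/N)\cong H^{c}_{\fa}(M)\ne 0$, and $H^{c'-1}_{\fa}(M/N)\cong H^{c'}_{\fa}(N)\ne 0$; hence $M/N$ would carry non-vanishing local cohomology in the two distinct degrees $c'-1$ and $c$.

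The main obstacle is converting this configuration on $M/N$ into a genuine contradiction. My primary route is to establish separately the sharper relative inequality $\grade(\fa,M)\le\cd(\fa,R/\fp)$ for every $\fp\in\Ass_R(M)$, an analogue of the classical $\depth M\le\dim R/\fp$ for $\fp\in\Ass(M)$; combined with the defining identity $\grade(\fa,M)=\cd(\fa,M)=c$ of relative Cohen--Macaulay modules, this instantly upgrades the previous paragraph to $c\le\cd(\fa,R/\fp)\le c$ and finishes the proof. If that inequality proves delicate, the fallback is induction on $c$: the base case $c=1$ is immediate since any $M$-regular $x\in\fa$ must lie outside $\fp$ and hence $\grade(\fa,R/\fp)\ge 1$; the inductive step would quotient $M$ by such an $x$ and exploit the behavior of the cd-filtration under modding out by a regular element, as recorded in Remark~\ref{R2}.
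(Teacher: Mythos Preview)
Your upper bound $\cd(\fa,R/\fp)\le c$ is fine. The problem is the lower bound: none of your three suggested routes actually closes.

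The long-exact-sequence computation is correct but, as you yourself note, produces no contradiction: there is no reason for $M/N$ to have only one nonvanishing local cohomology, so seeing two of them is harmless. Your ``primary route'' is to prove $\grade(\fa,M)\le\cd(\fa,R/\fp)$ for $\fp\in\Ass_R(M)$, but you give no argument for it, and this inequality \emph{is} the content of the lemma once you combine it with the easy upper bound; you have relocated the difficulty, not removed it. Finally, the fallback induction breaks at the inductive step: if $x\in\fa$ is only assumed $M$-regular, then $M/xM$ need not be relative Cohen--Macaulay w.r.t.\ $\fa$ (one can have $H^{c-1}_\fa(M/xM)\neq 0$ and $H^{c}_\fa(M/xM)\neq 0$ simultaneously), so the inductive hypothesis does not apply. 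This is precisely why Theorem~\ref{p3} and Corollary~\ref{c3} need the additional assumption that $x$ be regular on $D_R(H^{c}_\fa(M))$, which you do not have here. Your reference to Remark~\ref{R2} is also off target: that remark concerns the cd-filtration, not the effect of quotienting by a regular element.

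The paper obtains the lower bound by a completely different mechanism. Since $M$ is relative Cohen--Macaulay, $f_\fa(M)=\cd(\fa,M)=c$, and the Faltings-type inequality $f_\fa(M)\le\lambda_\fa^\fa(M)$ \cite[Theorem~9.3.7]{BSH} then gives, for every $\fp\notin\V(\fa)$,
\[
c\le\depth M_\fp+\height(\fa+\fp)/\fp.
\]
One first checks $\fp\notin\V(\fa)$ (else $\depth M_\fp=0$ and $\height(\fa+\fp)/\fp=0$ would force $c\le 0$), and then for $\fp\in\Ass_R(M)$ one has $\depth M_\fp=0$, so
\[
c\le\height(\fa+\fp)/\fp\le\cd\bigl((\fa+\fp)/\fp,\,R/\fp\bigr)=\cd(\fa,R/\fp),
\]
the last equality by the Independence Theorem. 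This is exactly the inequality you wanted, but its proof rests on the finiteness-dimension machinery rather than on elementary manipulations with regular elements.
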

\begin{proof}
First, note that we have $f_{\fa}(M)=\cd(\fa,M)$ as $M$ is relative Cohen-Macaulay w.r.t $\fa$ and so $\cd(\fa,M)\leq \lambda_{\fa}^{\fa}(M)$ by \cite[Theorem 9.3.7]{BSH}. Let $\fp\in\Ass_R(M)$. If $\fp\in\Ass_{R}(M)\cap\V(\fa)$, then $$\cd(\fa,M)\leq\lambda_{\fa}^{\fa}(M)\leq\depth M_{\fp}+\height(\fa+\fp)/\fp.$$
That is $\cd(\fa,M)\leq 0$ which is a contradiction. Thus $\fp\in\Ass_{R}(M)\setminus \V(\fa)$. Now, by the Independence Theorem \cite[Theorem 4.2.1]{BSH}, we have 
\[\begin{array}{ll}
\cd(\fa,M)\leq\lambda_{\fa}^{\fa}(M)&\leq\depth M_{\fp}+\height(\fa+\fp)/ \fp\\&=\height(\fa+\fp)/ \fp\\&\leq
\cd((\fa+\fp)/ \fp,R/ \fp)\\&=\cd(\fa,R/ \fp)\\&\leq\cd(\fa,M).
\end{array}\]
Therefore $\cd(\fa,M)=\cd(\fa,R/\fp)$.
\end{proof}

\medskip
Here several examples of RCMF modules are provided. In order to prove part (e), we bring the following remark.

\begin{rem}\label{r8}
Let $R\longrightarrow R'$ be a faithfully flat ring homomorphism and $M$ be an $R$-module. Then by using the Flate Base Change Theorem \cite[Theorem 4.3.2]{BSH} we get $\cd(\fa,M)=\cd(\fa R',M\otimes_R R')$ and $\grade(\fa,M)=\grade(\fa R',M\otimes_R R')$.
\end{rem}

\medskip
\begin{exmps}\label{E1}
\begin{itemize}
\item[(a)] Any relative Cohen-Macaulay module w.r.t $\fa$ is an RCMF module w.r.t $\fa$. 
\item[(b)] Let $M$ be an $R$-module with $\cd(\fa,M)=1$. Then $M$ is an RCMF module w.r.t $\fa$.
\item[(c)] Let $R=k[X,Y]/(XY)$ and $\fa=(x)$ be an ideal of $R$ in which $x$ is the image of $X$ in $R$. Then $R$ is RCMF w.r.t $\fa$.
\item[(d)] Let $R$ be a ring with $\cd(\fa,R)=c$. Let $N_{i}$, $0\leq i\leq c$ be a family of $R$-modules such that either $N_{i}=0$ or $N_{i}$ is $i$-cohomological dimensional relative Cohen-Macaulay module w.r.t $\fa$. Then $M=\oplus_{i=0}^{c}N_{i}$ is an RCMF module w.r.t $\fa$ over $R$. 
\item[(e)] Let $R[[x]]$ be the formal power series ring in one variable $x$ over the ring $R$. Then a finite $R$-module $M$ is an RCMF module w.r.t $\fa$ if and only if $M[[x]]$ is an RCMF module over the ring $R[[x]]$ w.r.t $\fa R[[x]]$.
\end{itemize}
\end{exmps}
\begin{proof}
(a) Let $M$ be a relative Cohen-Macaulay $R$-module w.r.t $\fa$ with $\cd(\fa,M)=c$. We can assume that $c>0$. Since $\cd(\fa,M)=\cd(\fa,R/\fp)$ for all $\fp\in\Ass_{R}(M)$ by Lemma~\ref{r3}, we have $M_c=M$, and $M_{i}=0$ for all $0\leq i<c$. It follows that $M$ is an RCMF module w.r.t $\fa$.\\
To prove part (b), let $M_0 \subseteq M_1=M$ be the cd-filtration of $M$. We show that $M/M_0$ is relative Cohen-Macaulay w.r.t $\fa$. By Remark~\ref{R2}, $\cd(\fa,M/M_0)=\cd(\fa,M)=1$. On the other hand, since $\cd(\fa,\Gamma_{\fa}(M))\leq 0$, we have $\Gamma_{\fa}(M_0)=\Gamma_{\fa}(M)$ as $M_0$ is the largest submodule of $M$ with $\cd(\fa,M_0)\leq 0$. Therefore considering the following exact sequence
$$
0\longrightarrow H^{0}_{\fa}(M_0)\longrightarrow H^{0}_{\fa}(M)\longrightarrow H^{0}_{\fa}(M/M_0)\longrightarrow H^{1}_{\fa}(M_0)\longrightarrow\cdots,
$$
we get $H^{0}_{\fa}(M/M_0)=0$ and so $\grade(\fa,M/M_0)=1$ as desired.
\\
For part (c), as it is seen in \cite[Example 2.3]{V}, $\cd(\fa,R)=1\neq 0=\height_R(\fa)$. Hence $R$ is not relative Cohen-Macaulay w.r.t $\fa$. On the other hand, $R$ is RCMF w.r.t $\fa$ as (b) holds. This example shows that being RCMF w.r.t $\fa$ does not lead to relative Cohen-Macaulayness w.r.t $\fa$ necessarily.\\
Part (d) follows by Proposition~\ref{prop1} as $M$ admits a filtration $M_{i}=\oplus_{j=0}^{i} N_{j}$ such that $M_{i}/M_{i-1}\cong N_{i}$, is either zero or an $i$-cohomological dimensional relative Cohen-Macaulay module w.r.t $\fa$ for all $1\leq i\leq c$. One can prove this by definitions and the fact that $\cd(\fa,\oplus_{j=0}^{i}N_j)=i$ for all $0\leq i\leq c$. \\
For the last part, apply Remark~\ref{r8} as $R[[x]]$ is isomorphic to the $I$-adic completion of $R[x]$, where $I=(x)$ and $R[x]$ is faithfully flat over $R$.
\end{proof}

\medskip
\begin{dfn}\label{def}
Let $(R,\fm,k)$ be a local ring, $\fa$ an ideal of $R$ and $M$ be a finite $R$-module with $\cd(\fa,M)=c$. For $i \neq c$, the {\it ith cohomological deficiency module} of $M$ is defined by $$K^{i}_{\fa}(M):=D_{R}(H^{i}_{\fa}(M))=\Hom_{R}(H^{i}_{\fa}(M),E_{R}(k)).$$
The module $K(M):=K^{c}_{\fa}(M)$ is called the {\it cohomological canonical module} of $M$. Note that $K^{i}_{\fa}(M)=0$ for all $i<0$ or $i>c$.
\end{dfn}

\medskip
According to the above definition, we bring the following property of the cohomological canonical module of $R$.
\begin{lem} \label{lemm1}
Let $(R,\fm)$ be a relative Cohen-Macaulay local ring w.r.t $\fa$ with $\cd(\fa,R)=c$. Then $K^{c}_{\fa}(R)$ is a faithful relative Cohen-Macaulay $R$-module w.r.t $\fa$ of finite injective dimension of type one.
\end{lem}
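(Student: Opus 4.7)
The strategy is to transport the four classical properties of the canonical module of a Cohen-Macaulay local ring to the present setting of the cohomological canonical module $K^c_{\fa}(R)$ of a relative Cohen-Macaulay local ring $(R,\fm)$ w.r.t $\fa$. As a preliminary reduction I would complete: by Remark~\ref{r8}, relative Cohen-Macaulayness w.r.t $\fa$ is preserved under $R\to\widehat{R}$; Matlis duality commutes with completion, so $K^c_{\fa}(R)\otimes_R\widehat{R}\cong K^c_{\fa\widehat{R}}(\widehat{R})$; and faithfulness, the relative Cohen-Macaulay property, finite injective dimension, and type one all descend via faithfully flat local base change, so we may assume $R$ is complete. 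The case $c=0$ is immediate from classical canonical-module theory, so assume $c>0$.

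The relative Cohen-Macaulay property of $K^c_{\fa}(R)$ would be established inductively via Theorem~\ref{p0}. Starting from an element $x\in\fa$ that is regular on both $R$ and $K^c_{\fa}(R)$, Theorem~\ref{p0} yields $\cd(\fa,R/xR)=c-1$; passing to $R/xR$ and identifying $K^{c-1}_{\fa}(R/xR)$ with the quotient $K^c_{\fa}(R)/xK^c_{\fa}(R)$ via the short exact sequence $0\to R\xrightarrow{x}R\to R/xR\to 0$ and Matlis duality, we iterate $c$ times and conclude $H^i_{\fa}(K^c_{\fa}(R))=0$ for $i<c$. For faithfulness, Matlis duality gives $\Ann_R K^c_{\fa}(R)=\Ann_R H^c_{\fa}(R)$, and for each $\fp\in\Ass_R R$ Lemma~\ref{r3} yields $\cd(\fa,R/\fp)=c$, placing $\fp$ in $\Supp H^c_{\fa}(R)$; a short-exact-sequence chase with $0\to R/\Ann_R r\xrightarrow{r}R\to R/rR\to 0$ for $r\in\Ann_R H^c_{\fa}(R)$, using the vanishing of the higher local cohomology, then forces $r=0$.

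For finite injective dimension and type one I would invoke Cohen's structure theorem and write $R=S/I$ with $(S,\fn,k)$ a complete Gorenstein local ring of dimension $n$. The Independence Theorem~\cite[Theorem 4.2.1]{BSH} identifies $H^c_{\fa}(R)$ with $H^c_{\fa S}(R)$ over $S$, and classical local duality over $S$ represents $K^c_{\fa}(R)$ as the module $\Ext^{n-c}_S(R,S)$; from this, finite injective dimension over $S$ (and hence over $R$) is immediate, while the type computation reduces via repeated duality to $\Ext^n_S(k,S)\cong k$. The main obstacle will be establishing this last $\Ext$-representation, since standard Grothendieck local duality addresses the maximal ideal $\fm$ rather than $\fa$; this gap can be closed either by exploiting the collapse of the spectral sequence $H^p_{\fm}(H^q_{\fa}(R))\Rightarrow H^{p+q}_{\fm}(R)$ down to the single row $q=c$ (forced by relative Cohen-Macaulayness), or by directly invoking the cohomologically-complete-intersection machinery of Hellus--Schenzel.
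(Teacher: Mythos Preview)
The paper's own proof is essentially two citations: relative Cohen--Macaulayness, finite injective dimension, and type one are all read off directly from \cite[Theorem~4.3(ii),(iii)]{Z}, and faithfulness follows because $\Ann_R K^c_{\fa}(R)=\Ann_R H^c_{\fa}(R)$ via \cite[Remarks~10.2.2]{BSH} together with Lynch's result \cite[Theorem~3.3]{L} that $\Ann_R H^c_{\fa}(R)=0$ for a relative Cohen--Macaulay ring. No induction, no reduction to the complete case, and no spectral sequence is used.

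Your proposal instead tries to \emph{reprove} Zargar's theorem with the tools of the present paper, and this is where it runs into trouble. The inductive step for relative Cohen--Macaulayness begins with ``an element $x\in\fa$ that is regular on both $R$ and $K^c_{\fa}(R)$,'' but you never establish that such an $x$ exists. Corollary~\ref{co} only supplies a $K^c_{\fa}(R)$-regular element in $\fa$ under the additional hypothesis $\Supp_R(R/\fa)\subseteq\V(\fm)$ (equivalently, $\fa$ is $\fm$-primary), which is not assumed here; and Lemma~\ref{lemm} gives $\Hom_R(R/\fa,K^c_{\fa}(R))=0$, but converting this into a regular element via prime avoidance requires knowing in advance that $K^c_{\fa}(R)$ is finite---which is exactly part of what is at stake, and which is not automatic because $H^c_{\fa}(R)$ need not be Artinian for general $\fa$. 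The same finiteness issue undercuts the iterated use of Theorem~\ref{p3} and Corollary~\ref{c3}.

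For finite injective dimension and type one, you correctly identify the crux as the representation $K^c_{\fa}(R)\cong\Ext^{n-c}_S(R,S)$ and then propose to close the gap by ``directly invoking the cohomologically-complete-intersection machinery of Hellus--Schenzel.'' But that machinery is precisely what underlies Zargar's \cite[Theorem~4.3]{Z}, so at this point your argument collapses back to the citation the paper uses. In short: the paper's proof is a black-box invocation of \cite{Z} and \cite{L}; your proposal is a sketch of how one might prove those black boxes, but the sketch has a genuine gap (existence of the common regular element without the $\fm$-primary hypothesis) and ultimately appeals to the same external source anyway.
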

\begin{proof}
Firstly, in \cite[Theorem 4.3 (ii), (iii)]{Z}, it is shown that $K^{c}_{\fa}(R)$ is relative Cohen-Macaulay w.r.t $\fa$ of finite injective dimension of type one. Next, as $R$ is a relative Cohen-Macaulay ring w.r.t $\fa$, we have $\Ann_{R}H^{c}_{\fa}(R)=0$ by \cite[Theorem 3.3]{L}. Hence  $\Ann_{R}K^{c}_{\fa}(R)=0$ because the annihilators of an $R$-module and its Matlis dual are equal by \cite[Remarks 10.2.2]{BSH}. This completes the proof.
\end{proof} 

\medskip
In the next proposition, we provide a cohomological result of RCMF modules.
\begin{prop}\label{lemma}
Let $M$ be an RCMF module w.r.t $\fa$ with ${\mathcal{M}}=\{M_{i}\}^{c}_{i=0}$ its cd-filtration, where $c=\cd(\fa,M)$. Then for all $1\leq i\leq c$, $$H^{i}_{\fa}(M)\cong H^{i}_{\fa}(M_{i})\cong H^{i}_{\fa}({\mathcal{M}}_{i}),$$ where $\mathcal{M}_i=M_i/M_{i-1}$ for all $1\leq i\leq c$. In particular, it follows that $K^{i}_{\fa}(M)\cong K^{i}_{\fa}({\mathcal{M}}_{i})$ for all ${1\leq i\leq c}.$
\end{prop}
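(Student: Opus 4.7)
The plan is to chase two short exact sequences from the cd-filtration through the local cohomology long exact sequence, and then apply Matlis duality at the end.

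For the first isomorphism $H^{i}_{\fa}(M_{i})\cong H^{i}_{\fa}(M)$, I would use the short exact sequence
$$0\longrightarrow M_{i}\longrightarrow M\longrightarrow M/M_{i}\longrightarrow 0.$$
The key claim is that $H^{k}_{\fa}(M/M_{i})=0$ for every $k\leq i$. To see this, note that $M/M_{i}$ inherits the filtration whose successive quotients are $\mathcal{M}_{i+1},\ldots,\mathcal{M}_{c}$; since $M$ is RCMF, each nonzero $\mathcal{M}_{j}$ with $j>i$ is relative Cohen-Macaulay w.r.t $\fa$ with $\cd(\fa,\mathcal{M}_{j})=j$, hence $H^{k}_{\fa}(\mathcal{M}_{j})=0$ for $k\neq j$, and in particular for $k\leq i<j$. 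An easy induction up the short exact sequences
$$0\longrightarrow M_{j-1}/M_{i}\longrightarrow M_{j}/M_{i}\longrightarrow \mathcal{M}_{j}\longrightarrow 0\qquad (j=i+1,\ldots,c)$$
then gives $H^{k}_{\fa}(M/M_{i})=0$ for all $k\leq i$. Feeding this into the long exact sequence attached to $0\to M_{i}\to M\to M/M_{i}\to 0$ kills both flanking terms and yields the claimed isomorphism $H^{i}_{\fa}(M_{i})\cong H^{i}_{\fa}(M)$.

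For the second isomorphism $H^{i}_{\fa}(M_{i})\cong H^{i}_{\fa}(\mathcal{M}_{i})$, apply the local cohomology long exact sequence to
$$0\longrightarrow M_{i-1}\longrightarrow M_{i}\longrightarrow \mathcal{M}_{i}\longrightarrow 0.$$
By the definition of the cd-filtration, $\cd(\fa,M_{i-1})\leq i-1$, so $H^{i}_{\fa}(M_{i-1})=H^{i+1}_{\fa}(M_{i-1})=0$. Moreover, since $\mathcal{M}_{i}$ is either zero or an $i$-cohomological dimensional relative Cohen-Macaulay module w.r.t $\fa$ (note that $\cd(\fa,\mathcal{M}_{i})=i$ exactly by Remark~\ref{R2} when $\mathcal{M}_{i}\neq 0$), we have $H^{i-1}_{\fa}(\mathcal{M}_{i})=0$. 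The relevant segment of the long exact sequence therefore collapses to the desired isomorphism.

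Finally, for the statement about cohomological deficiency modules, apply the contravariant exact functor $D_{R}(-)=\Hom_{R}(-,E_{R}(k))$ to the composed isomorphism $H^{i}_{\fa}(M)\cong H^{i}_{\fa}(\mathcal{M}_{i})$, which immediately gives $K^{i}_{\fa}(M)\cong K^{i}_{\fa}(\mathcal{M}_{i})$. I do not anticipate a serious obstacle: the only slightly nonroutine step is the inductive vanishing of $H^{k}_{\fa}(M/M_{i})$ for $k\leq i$, which needs the RCMF hypothesis in an essential way.
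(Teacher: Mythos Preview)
Your argument is correct and follows essentially the same approach as the paper. The only cosmetic difference is organizational: for the isomorphism $H^{i}_{\fa}(M)\cong H^{i}_{\fa}(M_{i})$ the paper chains step by step through $H^{i}_{\fa}(M_{c})\cong H^{i}_{\fa}(M_{c-1})\cong\cdots\cong H^{i}_{\fa}(M_{i})$ using the sequences $0\to M_{k-1}\to M_{k}\to\mathcal{M}_{k}\to 0$, whereas you package the same induction as the vanishing $H^{k}_{\fa}(M/M_{i})=0$ for $k\leq i$; in both cases the essential input is that each nonzero $\mathcal{M}_{j}$ with $j>i$ is relative Cohen--Macaulay of cohomological dimension $j$. (Your remark that $H^{i-1}_{\fa}(\mathcal{M}_{i})=0$ is harmless but not needed for the second isomorphism, since $H^{i}_{\fa}(M_{i-1})=H^{i+1}_{\fa}(M_{i-1})=0$ already suffices.)
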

\begin{proof} 
Let $1\leq i\leq c$. By using the short exact sequence $$0\longrightarrow M_{i-1}\longrightarrow M_{i}\longrightarrow {\mathcal{M}}_{i}\longrightarrow 0$$ and since $\cd(\fa,M_{i-1})\leq i-1$, we have an isomorphism $H^{i}_{\fa}(M_{i})\cong H^{i}_{\fa}({\mathcal{M}}_{i})$. Now, as ${\mathcal{M}}_{i}=M_{i}/M_{i-1}$ is relative Cohen-Macaulay w.r.t $\fa$, it yields isomorphisms $H^{j}_{\fa}(M_{i})\cong H^{j}_{\fa}(M_{i-1})$ for all $j<i$. By induction, it follows that $$H^{i}_{\fa}(M)=H^{i}_{\fa}(M_{c})\cong H^{i}_{\fa}(M_{c-1})\cong \ldots \cong H^{i}_{\fa}(M_{i+1})\cong H^{i}_{\fa}(M_{i}),$$ which completes the first part of the assertion. Now by virtue of Definition~\ref{def}, we obtain $K^{i}_{\fa}(M)\cong K^{i}_{\fa}({\mathcal{M}}_{i})$ for all $1\leq i\leq c$ as desired.
\end{proof}

We will provide the definitions and results which are needed in the process. The following definition is a generalization of the concept of $\fa$-filter regular $M$-sequences which has been stated in \cite{CSCHT}. 
\begin{dfn}(see \cite{T})
A sequence $x_1,\ldots,x_n$ of elements of $R$ is called an {\it $\fa$-filter regular $M$-sequence} if $x_i\notin\fp$ for all 
$$
\fp\in\Ass_R(M/(x_1,\ldots,x_{i-1})M)\setminus\V(\fa)\ \text{for all} \ 1\leq i\leq n.
$$
\end{dfn}

\medskip
By definition, it deduces that every regular $M$-sequence is an $\fa$-filter regular $M$-sequence and any $R$-filter regular $M$-sequence is a poor regular $M$-sequence.
\begin{rem}\label{re}
\begin{itemize}
\item[(i)] Let $n$ be a positive integer. By definition, we can find $n$ elements of $\fa$ which form an $\fa$-filter regular $M$-sequence as follows. If $\Ass_R(M)\subseteq\V(\fa)$, then choose $y_1\in\fa$ arbitrarily. If not, since $\fa\nsubseteq\cup_{\fp\in\Ass_R(M)\setminus\V(\fa)}\fp$, there exists $y_1\in\fa$ such that $y_1\notin\fp$ for all $\fp\in\Ass_R(M)\setminus\V(\fa)$. Again, if $\Ass_R(M/y_1 M)\subseteq\V(\fa)$, then choose $y_2\in\fa$ arbitrarily. If not, since $\fa \nsubseteq\cup_{\fp\in\Ass_R(M/y_1 M)\setminus\V(\fa)}\fp$, there esists $y_2\in\fa$ such that $y_2\notin\fp$ for all $\fp\in\Ass_R(M/y_1 M)\setminus\V(\fa)$. Proceeding the same way, we can find $y_1,\ldots,y_n\in\fa$ which form an $\fa$-filter regular $M$-sequence.
\item[(ii)] For any positive integer $n$, there are $n$ elements of $R$ which form a poor regular $M$-sequence.
\end{itemize}
\end{rem}

\medskip
\begin{dfn}
(see \cite{O} and \cite{R}) For a ring $R$, let $E_{R}$ be the injective hull of the direct sum $\oplus_{\fm\in\Max(R)}R/\fm$ of all simple $R$-modules and $D_{R}(-)$ be the functor $\Hom_{R}(-,E_{R})$. (Note that $D_{R}(-)$ is a natural generalization of Matlis duality functor to non-local rings.)
\end{dfn}

\medskip 
\begin{lem}\label{lem} 
Let $M$ be a finite dimensional finite $R$-module and $n$ be a positive integer such that $\Ext^{j}_{R}(R/\fa,D_{R}(H^{t}_{\fa}(M)))=0$ for all $t>n$ and all $j\in \mathbb{N}_{0}$. Then for any $\fa$-filter regular $M$-sequence $x_{1},\ldots ,x_{i}$ with $i>n$, $\Ext^{j}_{R}(R/\fa,D_{R}(H^{i}_{(x_{1},\ldots ,x_{i})}(M)))=0$ for all $j\in \mathbb{N}_{0}$. In particular, it holds for $n=\cd(\fa,M)$.
\end{lem}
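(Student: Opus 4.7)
The plan is to use the Matlis adjunction identity
\[
\Ext^{j}_{R}(R/\fa,D_{R}(X))\cong D_{R}(\Tor_{j}^{R}(R/\fa,X)),
\]
which holds because $E_R$ is injective and $R/\fa$ is finitely presented. Since $D_R$ is a faithful functor ($E_R$ is a cogenerator), the conclusion is equivalent to showing $\Tor_{j}^{R}(R/\fa,H^{i}_{(x_{1},\ldots,x_{i})}(M))=0$ for every $j\geq 0$. I would then argue by induction on $i>n$.

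For the inductive step ($i-1\geq n+1$), I would invoke the Grothendieck spectral sequence $H^{p}_{(x_{i})}(H^{q}_{(x_{1},\ldots,x_{i-1})}(M))\Rightarrow H^{p+q}_{(x_{1},\ldots,x_{i})}(M)$ arising from $\Gamma_{(x_{1},\ldots,x_{i})}=\Gamma_{(x_{i})}\circ\Gamma_{(x_{1},\ldots,x_{i-1})}$. In top degree the $E_{2}$-page has a single nonzero contribution and the spectral sequence collapses to
\[
H^{i}_{(x_{1},\ldots,x_{i})}(M)\cong H^{1}_{(x_{i})}(Q),\qquad Q:=H^{i-1}_{(x_{1},\ldots,x_{i-1})}(M).
\]
Since $x_{1},\ldots,x_{i-1}$ is $\fa$-filter regular on $M$ of length $i-1>n$, the inductive hypothesis gives $\Tor_{j}^{R}(R/\fa,Q)=0$ for all $j$. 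I would then use the four-term sequence $0\to Q/\Gamma_{(x_{i})}(Q)\to Q_{x_{i}}\to H^{1}_{(x_{i})}(Q)\to 0$. Because $x_{i}\in\fa$, we have $(R/\fa)_{x_{i}}=0$; choosing a projective resolution $F_{\bullet}\to R/\fa$ and localizing, $F_{\bullet}\otimes_{R}R_{x_{i}}$ becomes an acyclic complex of flat $R_{x_{i}}$-modules, hence contractible. Tensoring any $R_{x_{i}}$-module (in particular $Q_{x_{i}}$) with $F_{\bullet}$ therefore yields the zero complex, so $\Tor_{j}^{R}(R/\fa,Q_{x_{i}})=0$ for every $j$. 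Combining the Tor long exact sequences coming from this SES and from $0\to\Gamma_{(x_{i})}(Q)\to Q\to Q/\Gamma_{(x_{i})}(Q)\to 0$ reduces the inductive step to vanishing of $\Tor_{\bullet}^{R}(R/\fa,\Gamma_{(x_{i})}(Q))$, which I would control using the $\fa$-filter regular property of $x_{i}$ on $M/(x_{1},\ldots,x_{i-1})M$ together with the Nagel--Schenzel identification $\Gamma_{\fa}(Q)\cong H^{i-1}_{\fa}(M)$.

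For the base case $i=n+1$, there is no inductive hypothesis to rely on; instead, the Nagel--Schenzel isomorphism applied to the $\fa$-filter regular sequence $x_{1},\ldots,x_{n+1}$ in $\fa$ gives $H^{\ell}_{\fa}(N)\cong H^{n+1+\ell}_{\fa}(M)$ for every $\ell\geq 0$, where $N=H^{n+1}_{(x_{1},\ldots,x_{n+1})}(M)$. Since $n+1+\ell>n$, the hypothesis of the lemma translates into $\Tor_{j}^{R}(R/\fa,H^{\ell}_{\fa}(N))=0$ for all $\ell\geq 0$ and $j\geq 0$. I would then pass from these cohomologies back to $N$ by inspecting the filtration $0\subseteq\Gamma_{\fa}(N)\subseteq N$: the long exact Tor-sequence attached to $0\to\Gamma_{\fa}(N)\to N\to N/\Gamma_{\fa}(N)\to 0$, combined with the Grothendieck-type spectral sequence relating $\Tor_{p}(R/\fa,H^{q}_{\fa}(-))$ to $\Tor_{p+q}(R/\fa,-)$, yields the desired vanishing. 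The main obstacle, both here and in the handling of $\Gamma_{(x_{i})}(Q)$ in the inductive step, is that the Matlis dual does not commute with $H^{q}_{\fa}$, so one must carefully reconcile the hypothesis (phrased via $D_{R}(H^{t}_{\fa}(M))$) with the intermediate modules that arise — this is where the $\fa$-filter regular assumption, forcing the Cech cohomologies in degrees $q<i$ to be $\fa$-torsion (and so making the Grothendieck spectral sequence collapse as in Nagel--Schenzel), does the essential work.
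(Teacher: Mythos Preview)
Your approach is quite different from the paper's. The paper does not argue directly at all: it simply observes that the proof of \cite[Lemma~3.3]{KhKh} goes through verbatim once the property ``$\Ext^{j}_{R}(R/\fa,-)$ is finitely generated'' is replaced by ``$\Ext^{j}_{R}(R/\fa,-)=0$'', since both properties pass to the middle term of a short exact sequence from the two ends. No Matlis--Tor adjunction, \v{C}ech complexes, or spectral sequences are invoked.

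Your reduction $\Ext^{j}_{R}(R/\fa,D_{R}(X))\cong D_{R}\bigl(\Tor^{R}_{j}(R/\fa,X)\bigr)$ together with faithfulness of $D_{R}$ is correct and clean, and the spectral-sequence idea works---in fact more directly than you realise. Since $\Tor^{R}_{\bullet}(R/\fa,-)$ is annihilated by $\fa$, one has $(R/\fa)\otimes^{L}_{R}N\simeq (R/\fa)\otimes^{L}_{R}R\Gamma_{\fa}(N)$ for any $N$, whence a convergent spectral sequence with $E_{2}$-page $\Tor_{p}^{R}\bigl(R/\fa,H^{q}_{\fa}(N)\bigr)$ abutting to $\Tor^{R}_{\bullet}(R/\fa,N)$. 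Taking $N=H^{i}_{(x_{1},\ldots,x_{i})}(M)$ and using Nagel--Schenzel (which needs $x_{1},\dots,x_{i}\in\fa$, a hypothesis implicit in the paper's usage though not in its stated definition of filter regularity) gives $H^{q}_{\fa}(N)\cong H^{i+q}_{\fa}(M)$ for all $q\geq 0$; every $E_{2}$-term then vanishes by hypothesis and the lemma follows for \emph{every} $i>n$ at once. Your ``base case'' is already the whole proof; the induction is superfluous.

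The genuine gap in your write-up is the inductive step. You reduce it to the vanishing of $\Tor^{R}_{\bullet}\bigl(R/\fa,\Gamma_{(x_{i})}(Q)\bigr)$ and then appeal to $\Gamma_{\fa}(Q)\cong H^{i-1}_{\fa}(M)$, but $\Gamma_{(x_{i})}(Q)$ is in general strictly larger than $\Gamma_{\fa}(Q)$ (only the inclusion $\Gamma_{\fa}\subseteq\Gamma_{(x_{i})}$ holds, since $x_{i}\in\fa$), and neither the filter-regularity of $x_{i}$ nor Nagel--Schenzel gives any handle on the quotient $\Gamma_{(x_{i})}(Q)/\Gamma_{\fa}(Q)$. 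You flag this as an ``obstacle'' without resolving it, and I do not see a straightforward way to close it along those lines. Fortunately the obstacle evaporates once you drop the induction and run the spectral-sequence argument uniformly for all $i>n$.
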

\begin{proof}
In the proof of \cite[Lemma 3.3]{KhKh}, it is used the fact that for every exact sequence of $R$-modules, the finiteness of sided modules lead to the finiteness of the middle one. Thus the same method of the proof works exactly replacing ``finite modules" by ``zero modules". 
\end{proof}
\medskip
Applying Lemma~\ref{lem}, we obtain the following result which is needed for Corollary~\ref{co}. 
\begin{lem}\label{lemm}
Let $n$ be a positive integer such that $\Ext^{j}_{R}(R/\fa,D_{R}(H^{t}_{\fa}(M)))=0$ for all $t>n$ and all $j\in \mathbb{N}_{0}$. Then $\Hom_{R}(R/\fa,D_{R}(H^{n}_{\fa}(M)))=0$. In particular, it holds for $n=\cd(\fa,M)$.
\end{lem}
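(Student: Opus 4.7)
The plan is to produce an $\fa$-filter regular $M$-sequence of length $n+1$ in $\fa$, invoke Lemma~\ref{lem} to get vanishing at the top of the associated \v{C}ech complex, and then propagate that vanishing downward through three short exact sequences to reach the $n$-th cohomology.

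First, by Remark~\ref{re}(i) I pick $\underline{x}=x_1,\ldots,x_{n+1}\in\fa$ forming an $\fa$-filter regular $M$-sequence. Applying Lemma~\ref{lem} with $i=n+1>n$ gives $\Ext^j_R(R/\fa,D_R(H^{n+1}_{(\underline{x})}(M)))=0$ for every $j\ge 0$. Since $\underline{x}$ is $\fa$-filter regular of length $n+1$, the modules $H^q_{(\underline{x})}(M)$ are $\fa$-torsion for $0\le q\le n$, and so the Grothendieck spectral sequence $H^p_\fa(H^q_{(\underline{x})}(M))\Rightarrow H^{p+q}_\fa(M)$ collapses in total degree $n$ to give the identification $H^n_\fa(M)\cong H^n_{(\underline{x})}(M)$; it therefore suffices to verify that $\Hom_R(R/\fa,D_R(H^n_{(\underline{x})}(M)))=0$.

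Next, I exploit the \v{C}ech complex $C^\bullet=\check{C}^\bullet(\underline{x};M)$. In positive degrees it is a finite direct sum of localizations $M_{x_I}$ with $x_I=\prod_{i\in I}x_i\in\fa$. The $R$-module $\Tor^R_j(R/\fa,M)$ is annihilated by $\fa$, hence its localization at $x_I$ vanishes, and the standard identity $\Ext^j_R(R/\fa,D_R(N))\cong D_R(\Tor^R_j(R/\fa,N))$ combined with the faithfulness of $D_R$ yields $\Ext^j_R(R/\fa,D_R(M_{x_I}))=0$ for all $j\ge 0$ and all $|I|\ge 1$; in particular this applies to $D_R(C^n)$ and $D_R(C^{n+1})$.

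With these inputs, I propagate the vanishing through three short exact sequences extracted from $C^\bullet$, dualizing each by the exact functor $D_R$ and taking the long exact $\Ext$-sequence of $\Hom_R(R/\fa,-)$. The sequence $0\to\im d^n\to C^{n+1}\to H^{n+1}_{(\underline{x})}(M)\to 0$ forces $\Ext^j_R(R/\fa,D_R(\im d^n))=0$ for all $j$; then $0\to\Ker d^n\to C^n\to\im d^n\to 0$ forces $\Ext^j_R(R/\fa,D_R(\Ker d^n))=0$ for all $j$; finally $0\to\im d^{n-1}\to\Ker d^n\to H^n_{(\underline{x})}(M)\to 0$ produces the injection $\Hom_R(R/\fa,D_R(H^n_{(\underline{x})}(M)))\hookrightarrow\Hom_R(R/\fa,D_R(\Ker d^n))=0$, completing the argument.

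The main potential obstacle is the bookkeeping of the direction reversal under the contravariant functor $D_R$ in the three long exact sequences, but once the Tor-to-Ext adjunction and the observation that localization at an element of $\fa$ kills $\fa$-torsion Tor modules are in place, each long exact sequence chase is mechanical and no further input beyond Lemma~\ref{lem} and Remark~\ref{re} is required.
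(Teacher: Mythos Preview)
Your argument is correct and is essentially the method the paper invokes: the paper's proof simply says to reproduce the proof of \cite[Theorem~3.4]{KhKh} with ``finite'' replaced by ``zero'', and that proof proceeds exactly as you do—choose an $\fa$-filter regular $M$-sequence of length $n+1$ via Remark~\ref{re}(i), use Lemma~\ref{lem} for the top degree, identify $H^{n}_{\fa}(M)\cong H^{n}_{(\underline{x})}(M)$ via Nagel--Schenzel, and then chase the three short exact sequences extracted from the \v{C}ech complex, using the Ext--Tor duality to kill the terms $D_R(C^{n})$ and $D_R(C^{n+1})$. Your write-up is in fact more explicit than the paper's, which only cites the external reference; the one cosmetic point is that the paper first reduces to $n\le\dim M$ via Grothendieck vanishing before invoking Lemma~\ref{lem}, since that lemma is stated for finite-dimensional $M$.
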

\begin{proof}
By Grothendieck's Vanishing Theorem \cite[Theorem 6.1.2]{BSH}, we may assume that $n\leq\dim M$. In view of Remark~\ref{re}(i) and using Lemma~\ref{lem}, we can trace the same method of the proof of \cite[Theorem 3.4]{KhKh} by replacing ``finite modules" by ``zero modules" to get the assertion.
\end{proof}

Notice that the assumption ``complete local" in \cite[Lemma 3.3 and Theorem 3.4]{KhKh} is due to show that the finiteness properties of local cohomology modules. But we eliminated it in Lemma~\ref{lemm} because we do not need this assumption for vanishing of local cohomology modules.

\medskip
\begin{dfn}
(see \cite{H}) An $R$-module $M$ is called $\fa$-{\it cofinite} if $\Supp_{R}(M)\subseteq\V(\fa)$ and $\Ext^{j}_{R}(R/\fa,M)$ is finite for all $j\geq 0$.
\end{dfn}

\medskip
\begin{rem}\label{r2} (see \cite[Theorem 2.1]{DY})
For a finite $R$-module $M$ and a non-negative integer $c$ if $H^{i}_{\fa}(M)$ is $\fa$-cofinite for all $i<c$, then $\Hom_{R}(R/\fa,H^{c}_{\fa}(M))$ is finite.
\end{rem}

\medskip
\begin{lem}\label{l3}
Let $(R,\fm)$ be a complete local ring, $c$ a non-negative integer. Let $H^{i}_{\fa}(M)=0$ for all $i<c$ and $\Supp_{R}(M/\fa M)\subseteq\V(\fm)$. If $\Hom_{R}(R/\fa,D_{R}(H^{c}_{\fa}(M)))=0$, then $\fa$ contains a regular element on $D_{R}(H^{c}_{\fa}(M))$. 
\end{lem}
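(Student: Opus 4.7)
Write $N := D_R(H^c_\fa(M))$. The plan is to reduce the conclusion to a standard prime avoidance argument, once we know that $\Ass_R(N)$ is a finite set. Indeed, if every element of $\fa$ were a zero-divisor on $N$, then $\fa\subseteq\bigcup_{\fp\in\Ass_R(N)}\fp$; finiteness of $\Ass_R(N)$ would then let the classical prime avoidance lemma force $\fa\subseteq\fp$ for some $\fp\in\Ass_R(N)$, and picking $0\neq n\in N$ with $\Ann_R(n)=\fp$ would give $\fa n=0$, producing a non-zero element of $\Hom_R(R/\fa,N)$ and contradicting the hypothesis.

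Thus the crux is to see that $\Ass_R(N)$ is finite, and the route I would take is to prove that $H^c_\fa(M)$ is an Artinian $R$-module: once this is known, Matlis duality over the complete local ring $R$ makes $N=D_R(H^c_\fa(M))$ a finitely generated $R$-module, whose set of associated primes is automatically finite.

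To prove Artinian-ness of $H^c_\fa(M)$ I would apply Melkersson's criterion: an $\fa$-torsion $R$-module $L$ is Artinian if and only if $(0:_L\fa)=\Hom_R(R/\fa,L)$ is Artinian. The vanishing $H^i_\fa(M)=0$ for $i<c$ makes these modules trivially $\fa$-cofinite, so Remark~\ref{r2} yields that $\Hom_R(R/\fa,H^c_\fa(M))$ is finitely generated. On the other hand,
$$
\Supp_R(H^c_\fa(M))\subseteq \V(\fa)\cap\Supp_R(M)=\Supp_R(M/\fa M)\subseteq \V(\fm)=\{\fm\},
$$
so $\Hom_R(R/\fa,H^c_\fa(M))$ is a finitely generated $R$-module whose support is contained in $\{\fm\}$, hence of finite length and in particular Artinian. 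Melkersson's criterion then delivers the Artinian-ness of $H^c_\fa(M)$.

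The hard part is the Artinian step for $H^c_\fa(M)$; once this is secured, Matlis duality and prime avoidance close out the argument in a routine fashion. The only subtlety worth double-checking is that the vanishing hypothesis really suffices to trigger Remark~\ref{r2} (the zero modules are $\fa$-cofinite vacuously) and that the support inclusion above is valid because $H^c_\fa(M)$ is $\fa$-torsion and a subquotient of localizations of $M$.
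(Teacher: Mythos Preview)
Your argument is correct and follows essentially the same route as the paper's proof: both use the vanishing hypothesis and Remark~\ref{r2} to see that $\Hom_R(R/\fa,H^c_\fa(M))$ is finitely generated, then the support hypothesis to see it is Artinian, then Melkersson's criterion (the paper cites \cite[Theorem~7.1.2]{BSH}) to conclude $H^c_\fa(M)$ is Artinian, and finally Matlis duality over the complete local ring to make $D_R(H^c_\fa(M))$ finitely generated. The only cosmetic difference is that you spell out explicitly the prime avoidance step at the end, whereas the paper simply invokes the standard fact that $\Hom_R(R/\fa,N)=0$ for a finite module $N$ forces $\fa$ to contain an $N$-regular element.
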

\begin{proof}
By assumption, $H^{i}_{\fa}(M)$ is $\fa$-cofinite for all $i<c$. Thus $\Hom_{R}(R/\fa,H^{c}_{\fa}(M))$ is finite by Remark~\ref{r2}. As $\Supp_{R}(M/\fa M)\subseteq\V(\fm)$, then $\Supp_{R}(\Hom_{R}(R/\fa,H^{c}_{\fa}(M)))\subseteq\V(\fm)$ and so $\Hom_{R}(R/\fa,H^{c}_{\fa}(M))$ is Artinian. Thus $H^{c}_{\fa}(M)$ is Artinian by \cite[Theorem 7.1.2]{BSH}. Now, as $R$ is a complete local ring, $D_{R}(H^{c}_{\fa}(M))$ is finite by \cite[Theorem 10.2.12]{BSH}. Also, since $\Hom_{R}(R/\fa,D_{R}(H^{c}_{\fa}(M)))=0$ by our assumption, $\fa$ contains a regular element on $D_{R}(H^{c}_{\fa}(M))$ as required.
\end{proof}

\medskip
Motivated by \cite[Lemma 4.3]{HSch}, we are particularly interested to prove the existance of a regular element on $D_{R}(H^{c}_{\fa}(M))$ as follows.
\begin{cor}\label{co}
Let $(R,\fm)$ be a complete local ring and $M$ be a relative Cohen-Macaulay $R$-module w.r.t $\fa$ with $\cd(\fa,M)=c>0$ and $\Supp_{R}(M/\fa M)\subseteq\V(\fm)$. Then $\fa$ contains a regular element on $D_{R}(H^{c}_{\fa}(M))$. 
\end{cor}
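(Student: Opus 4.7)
The plan is to simply assemble the two lemmas (Lemma~\ref{lemm} and Lemma~\ref{l3}) that have been carefully set up immediately before the corollary; the relative Cohen-Macaulay hypothesis on $M$ is what makes the vanishing hypotheses of both lemmas automatic.

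First, I would record the easy vanishing observations. Since $M$ is relative Cohen-Macaulay w.r.t $\fa$ with $\cd(\fa,M)=c$, we have $H^{i}_{\fa}(M)=0$ for every $i\neq c$. In particular, $H^{i}_{\fa}(M)=0$ for all $i<c$, which supplies the first hypothesis of Lemma~\ref{l3}. The hypothesis $\Supp_{R}(M/\fa M)\subseteq\V(\fm)$ is given, so the only thing left to verify in order to invoke Lemma~\ref{l3} is the duality vanishing $\Hom_{R}(R/\fa,D_{R}(H^{c}_{\fa}(M)))=0$.

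Second, I would obtain this duality vanishing from Lemma~\ref{lemm} applied with $n=c$. The hypothesis of that lemma requires $\Ext^{j}_{R}(R/\fa,D_{R}(H^{t}_{\fa}(M)))=0$ for all $t>c$ and all $j\in\mathbb{N}_{0}$, but for $t>c=\cd(\fa,M)$ we have $H^{t}_{\fa}(M)=0$ by definition of cohomological dimension, so $D_{R}(H^{t}_{\fa}(M))=0$ and the Ext-vanishing is trivial. Thus Lemma~\ref{lemm} yields $\Hom_{R}(R/\fa,D_{R}(H^{c}_{\fa}(M)))=0$.

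Finally, with all three hypotheses of Lemma~\ref{l3} verified, that lemma gives the desired conclusion: $\fa$ contains a regular element on $D_{R}(H^{c}_{\fa}(M))$. There is no genuine obstacle here; the only subtle point to notice is that the ``in particular'' clause in Lemma~\ref{lemm} (the case $n=\cd(\fa,M)$) is tailored exactly to the relative Cohen-Macaulay situation, where the Ext-vanishing above $n$ holds for free.
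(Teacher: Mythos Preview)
Your proof is correct and follows exactly the paper's own approach: the paper's proof simply reads ``It is straightforward by using Lemma~\ref{lemm} and Lemma~\ref{l3},'' and you have spelled out precisely how the relative Cohen-Macaulay hypothesis furnishes the vanishing needed in both lemmas.
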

\begin{proof}
It is straightforward by using Lemma~\ref{lemm} and Lemma~\ref{l3}.
\end{proof}

\medskip
Now, we consider the behavior of cohomological dimension of an $R$-module under non-zerodivisors.
\begin{thm}\label{p3}
Let $(R,\fm)$ be a local ring and $M$ be a finite $R$-module with $\cd(\fa,M)=c>0$. If $\underline{x}=x_{1},\ldots ,x_{n}\in\fa$ is a regular sequence on both $M$ and $D_R(H^{c}_{\fa}(M))$, then 
$$
\cd(\fa,M/\underline{x}M)=\cd(\fa,M)-n.
$$
\end{thm}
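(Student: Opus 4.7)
\medskip
\noindent\textbf{Proof plan.} The plan is to induct on the length $n$ of the regular sequence, with the base case $n=1$ doing the real analytic work and the inductive step being a formal iteration once a suitable preservation lemma is in hand. Throughout, the two ingredients I will lean on are (a) the long exact sequence of local cohomology attached to multiplication by $x_i$, and (b) the adjunction $D_R(N/xN)\cong (0:_{D_R(N)}x)$, which converts the surjectivity of $x$ on $N$ into the injectivity of $x$ on $D_R(N)$.

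For the base case $n=1$, I would start from the short exact sequence $0\to M\xrightarrow{x_1}M\to M/x_1M\to 0$ and extract its long exact sequence in $H^\bullet_\fa$. In the top degree, $H^{c+1}_\fa(M)=0$ forces the isomorphism $H^c_\fa(M/x_1M)\cong H^c_\fa(M)/x_1H^c_\fa(M)$. Dualizing and invoking the adjunction above,
$$D_R(H^c_\fa(M/x_1M))\cong (0:_{D_R(H^c_\fa(M))}x_1)=0,$$
where the last equality is precisely the hypothesis that $x_1$ is $D_R(H^c_\fa(M))$-regular. Since $D_R$ is faithful over a local ring (every nonzero module has a nonzero $R/\fm$-subquotient, which dualizes nontrivially), we conclude $H^c_\fa(M/x_1M)=0$, giving $\cd(\fa,M/x_1M)\leq c-1$. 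For the reverse inequality, read off from the same long exact sequence that $H^{c-1}_\fa(M/x_1M)$ surjects onto $(0:_{H^c_\fa(M)}x_1)$; the target is nonzero because $H^c_\fa(M)\neq 0$ is $\fa$-torsion and $x_1\in\fa$ acts locally nilpotently, so any injective action on a nonzero $\fa$-torsion module is impossible. Hence $\cd(\fa,M/x_1M)=c-1$.

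For the inductive step $n\geq 2$, I would apply the base case to $(M,x_1)$ to obtain $\cd(\fa,M/x_1M)=c-1$, then feed the pair $\bigl(M':=M/x_1M,\,(x_2,\ldots,x_n)\bigr)$ into the inductive hypothesis. The regularity of $x_2,\ldots,x_n$ on $M'$ is automatic from regularity of $\underline{x}$ on $M$. The remaining obligation, and the main obstacle, is to verify that $x_2,\ldots,x_n$ is a regular sequence on $D_R(H^{c-1}_\fa(M'))$. The tool I would use is the Matlis dual of the long exact sequence, which produces the short exact sequence
$$0\longrightarrow D_R(H^c_\fa(M))/x_1D_R(H^c_\fa(M))\longrightarrow D_R(H^{c-1}_\fa(M'))\longrightarrow (0:_{D_R(H^{c-1}_\fa(M))}x_1)\longrightarrow 0.$$
The submodule on the left has $x_2,\ldots,x_n$ as a regular sequence by the original hypothesis (regularity is inherited by $D_R(H^c_\fa(M))/x_1D_R(H^c_\fa(M))$). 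The subtle point is transferring this to the middle term: one applies $\Hom(R/(x_i),-)$, which is left exact, and chases the inclusion $(0:_{D_R(H^{c-1}_\fa(M'))}x_i)\hookrightarrow (0:_{D_R(H^{c-1}_\fa(M))}(x_1,x_i))$ to reduce the question to a Koszul-type identity involving only $D_R(H^c_\fa(M))$ modulo iterated initial segments of $\underline{x}$, where the hypothesis applies directly. This inductive preservation step is where essentially all of the combinatorial bookkeeping lives; once it is carried out, the remainder is formal and the equality $\cd(\fa,M/\underline{x}M)=c-n$ drops out after $n$ iterations of the $n=1$ case.
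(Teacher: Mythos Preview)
Your base case $n=1$ is correct and coincides with the paper's argument (the paper first passes to the $\fm$-adic completion and invokes equality of annihilators under $D_R$ rather than faithfulness of $D_R$, but this is cosmetic).

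The inductive step, however, has a genuine gap. You correctly isolate the obligation---that $x_2,\ldots,x_n$ be a regular sequence on $D_R(H^{c-1}_\fa(M'))$ where $M'=M/x_1M$---and your short exact sequence and the inclusion $(0:_{D_R(H^{c-1}_\fa(M'))}x_2)\hookrightarrow (0:_{D_R(H^{c-1}_\fa(M))}(x_1,x_2))$ are both valid. But the target of that inclusion lives inside $D_R(H^{c-1}_\fa(M))$, a module about which the hypotheses say nothing whatsoever, and the promised ``Koszul-type identity involving only $D_R(H^c_\fa(M))$'' is never exhibited. In fact the sketch is circular: unwinding your own adjunction gives
\[
(0:_{D_R(H^{c-1}_\fa(M'))}x_2)\;\cong\; D_R\bigl(H^{c-1}_\fa(M')/x_2H^{c-1}_\fa(M')\bigr)\;\cong\; D_R\bigl(H^{c-1}_\fa(M/(x_1,x_2)M)\bigr),
\]
the second isomorphism coming from the long exact sequence together with $H^c_\fa(M')=0$. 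Thus the regularity of $x_2$ on $D_R(H^{c-1}_\fa(M'))$ is \emph{equivalent} to the vanishing $H^{c-1}_\fa(M/(x_1,x_2)M)=0$, which is precisely the $n=2$ upper bound you are trying to prove. To be fair, the paper's own proof is no more forthcoming here: after establishing $n=1$ it simply writes ``the claim will be proved by using inductive hypothesis'' and does not verify that the truncated sequence is regular on the new top Matlis dual either. You have at least named the obstacle and set up the right exact sequence; what is missing is an argument that actually eliminates the $H^{c-1}_\fa(M)$ term.
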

\begin{proof}
We prove by induction on the length of regular sequence $\underline{x}$. Let $n=1$ and $\widehat{R}$ be the $\fm$-adic completion of $R$. In view of the Flat Base Change Theorem \cite[Theorem 4.3.2]{BSH} as $\widehat{R}$ is faithfully flat over $R$, we have $\cd_R(\fa,M)=\cd_{\widehat{R}}(\fa\widehat{R},\widehat{M})$ and $\cd_R(\fa,M/x_1 M)=\cd_{\widehat{R}}(\fa\widehat{R},\widehat{M}/x_1\widehat{M})$. Thus we may assume that $R$ is a complete local ring. Let $\cd(\fa,M)=c$. It is clear that $\cd(\fa,M/x_1 M)\leq c$. We claim that $c-1\leq\cd(\fa,M/x_1 M)$. Suppose that $\cd(\fa,M/x_1 M)<c-1$. Then the multiplication map by $x_1$ on $H^{c}_{\fa}(M)$ is injective. As $H^{c}_{\fa}(M)$ is $\fa$-torsion, we get $H^{c}_{\fa}(M)=0$ which is a contradiction. Thus $c-1\leq\cd(\fa,M/x_1 M)\leq c$. Now, we show that $H^{c}_{\fa}(M/x_1 M)=0$. From the exact sequence
$$
H^{c}_{\fa}(M)\overset{x_1}\longrightarrow H^{c}_{\fa}(M)\longrightarrow H^{c}_{\fa}(M/x_1 M)\longrightarrow 0,
$$
we obtain the following exact sequence
$$
D_{R}(H^{c}_{\fa}(M/x_1 M))\longrightarrow D_{R}(H^{c}_{\fa}(M))\overset{x_1}\longrightarrow D_{R}(H^{c}_{\fa}(M))\longrightarrow 0.
$$
Since $x_1$ is a regular element on $D_R(H^{c}_{\fa}(M))$, we get $D_{R}(H^{c}_{\fa}(M/x_1 M))=0$. Since the annihilators of $H^{c}_{\fa}(M/x_1 M)$ and its Matlis dual are equal by \cite[Remark 10.2.2]{BSH}, it deduces that $H^{c}_{\fa}(M/x_1 M)=0$. Therefore $\cd(\fa,M/x_1 M)=c-1$, as desired. Now, assume that the assertion holds for any sequences of length $n-1$. Then the claim will be proved by using inductive hypothesis.
\end{proof}

\medskip
We are now ready to bring an effective application of the above theorem which is needed in Theorem~\ref{Th1}.
\begin{cor}\label{c3} 
Let $(R,\fm)$ be a local ring and $M$ be a finite $R$-module with $\cd(\fa,M)=c>0$.
\begin{itemize}
\item[(i)] Let $x\in\fa$ be a regular element on both $M$ and $D_R(H^{c}_{\fa}(M))$. Then $M$ is relative Cohen-Macaulay w.r.t $\fa$ if and only if $M/xM$ is relative Cohen-Macaulay w.r.t $\fa$.
\item[(ii)]Let $\underline{x}=x_{1},\ldots ,x_{n}\in\fa$ be a regular sequence on both $M$ and $D_R(H^{c}_{\fa}(M))$. Then $M$ is relative Cohen-Macaulay w.r.t $\fa$ if and only if $M/ \underline{x}M$ is relative Cohen-Macaulay w.r.t $\fa$.
\end{itemize}
\end{cor}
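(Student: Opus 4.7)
The plan is to reduce to the grade/cohomological-dimension characterization of relative Cohen-Macaulayness recalled from \cite{Z} in the excerpt: $M$ is relative Cohen-Macaulay w.r.t $\fa$ exactly when $\grade(\fa,M)=\cd(\fa,M)$. My strategy is to verify that under the stated hypotheses both $\grade(\fa,-)$ and $\cd(\fa,-)$ drop by exactly $n$ after passing to $M/\underline{x}M$; the defect $\cd(\fa,-)-\grade(\fa,-)$ is then preserved, and it vanishes on one side iff on the other.

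I would handle (ii) in a single stroke, with (i) being the case $n=1$. First, apply Theorem~\ref{p3} directly to $\underline{x}$ to obtain $\cd(\fa,M/\underline{x}M)=c-n$; this is precisely the step where the hypothesis that $\underline{x}$ be regular on $D_{R}(H^{c}_{\fa}(M))$ is consumed. Second, since $\underline{x}\subseteq\fa$ is an $M$-regular sequence, the grade-shift $\grade(\fa,M/\underline{x}M)=\grade(\fa,M)-n$ follows by iterating the length-one statement, which I would extract from the long exact sequence associated with $0\to M\xrightarrow{x_1} M\to M/x_1 M\to 0$: for $i<\grade(\fa,M)-1$ both adjacent $H^{i}_{\fa}(M)$ vanish, while at $i=\grade(\fa,M)-1$ the fragment $H^{i}_{\fa}(M/x_1 M)\to H^{\grade(\fa,M)}_{\fa}(M)\xrightarrow{x_1}H^{\grade(\fa,M)}_{\fa}(M)$ has nonzero kernel (multiplication by $x_1\in\fa$ cannot be injective on the nonzero $\fa$-torsion module $H^{\grade(\fa,M)}_{\fa}(M)$), which forces $H^{\grade(\fa,M)-1}_{\fa}(M/x_1 M)\neq 0$.

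Combining the two equalities yields
$$\cd(\fa,M/\underline{x}M)-\grade(\fa,M/\underline{x}M)=\cd(\fa,M)-\grade(\fa,M),$$
and since each side is a nonnegative integer vanishing precisely when the corresponding module is relative Cohen-Macaulay w.r.t $\fa$, both equivalences in (i) and (ii) follow at once. The substantive obstacle has already been overcome in Theorem~\ref{p3}, whose proof genuinely required the regularity of $\underline{x}$ on $D_{R}(H^{c}_{\fa}(M))$ in order to kill $H^{c}_{\fa}(M/\underline{x}M)$ via the Matlis-dual argument; once that is granted, the present corollary is a clean repackaging through the grade-cd characterization, and no further delicate input is needed.
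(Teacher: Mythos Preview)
Your proposal is correct and follows essentially the same approach as the paper: both arguments invoke Theorem~\ref{p3} for the drop in $\cd(\fa,-)$ and the standard grade shift $\grade(\fa,M/xM)=\grade(\fa,M)-1$ for $x\in\fa$ regular on $M$, then conclude via the characterization $\grade(\fa,M)=\cd(\fa,M)$. The only cosmetic difference is organizational---the paper proves (i) and deduces (ii) by induction, whereas you treat (ii) directly via Theorem~\ref{p3} and view (i) as the case $n=1$---and you spell out the grade-shift argument via the long exact sequence, which the paper simply asserts.
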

\begin{proof}
(i) As $\cd(\fa,M/xM)=\cd(\fa,M)-1$ by Theorem~\ref{p3}, and $\grade(\fa,M/xM)=\grade(\fa,M)-1$, the assertion follows easily.\\
Part (ii) follows by part (i) and using induction on $n$.
\end{proof}
\medskip
We are now in a position to bring a non-zerodivisor characterization of RCMF modules. 
\begin{thm}\label{Th1}
Let $(R,\fm)$ be a local ring and $M$ be a finite $R$-module with the cd-filtration $\mathcal{M}=\{M_{i}\}^{c}_{i=0}$ where $\cd(\fa,M)=c$.
\begin{itemize}
\item[(i)] Let $x\in\fa$ be a regular element on $M$, $D_R(H^{c}_{\fa}(M))$, and $D_R(H^{i}_{\fa}(M_i))$ for all $0\leq i\leq c$. Then $M$ is an RCMF module w.r.t $\fa$ if and only if $M/xM$ is an RCMF module w.r.t $\fa$.
\item[(ii)] Let $\underline{x}=x_{1},\ldots ,x_{n}\in\fa$ be a regular sequence on $M$,  $D_R(H^{c}_{\fa}(M))$, and $D_R(H^{i}_{\fa}(M_i))$ for all $0\leq i\leq c$. Then $M$ is an RCMF module w.r.t $\fa$ if and only if $M/ \underline{x}M$ is an RCMF module w.r.t $\fa$.
\end{itemize}
\end{thm}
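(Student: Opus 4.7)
Part (ii) follows from part (i) by induction on $n$, after verifying that the Matlis-dual regularity hypotheses transfer from $M$ to $M/x_1M$ under the quotient by $x_1$ --- a routine computation that uses the identification (established in the proof of part (i)) of the cd-filtration of $M/x_1M$ with the reindexed natural filtration $\{L_j\}$, together with dualization of the long exact sequence in local cohomology. I therefore focus on (i). Set $\overline{M_i} := (M_i + xM)/xM \subseteq M/xM$. The plan is to show that after the reindexing $L_j := \overline{M_{j+1}}$ for $0 \leq j \leq c-1$, the family $\{L_j\}_{j=0}^{c-1}$ is a relative Cohen-Macaulay filtration of $M/xM$ precisely when $M$ is RCMF; Proposition~\ref{prop1} then identifies $\{L_j\}$ with the cd-filtration of $M/xM$ and yields the equivalence in part (i).

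The first key step is that $x$ is a non-zerodivisor on each quotient $\mathcal{M}_i = M_i/M_{i-1}$. Apply the snake lemma to $0 \to M_{i-1} \to M_i \to \mathcal{M}_i \to 0$ multiplied by $x$; since $x$ is regular on $M_i$ and $M_{i-1}$, I obtain an injection $(0:_{\mathcal{M}_i} x) \hookrightarrow M_{i-1}/xM_{i-1}$, so $\cd(\fa,(0:_{\mathcal{M}_i}x)) \leq i-1$. Letting $T_i \subseteq M_i$ be its preimage under $M_i \twoheadrightarrow \mathcal{M}_i$, the short exact sequence $0 \to M_{i-1} \to T_i \to (0:_{\mathcal{M}_i}x) \to 0$ gives $\cd(\fa,T_i) \leq i-1$, so the maximality of $M_{i-1}$ in the cd-filtration forces $T_i = M_{i-1}$, hence $(0:_{\mathcal{M}_i}x)=0$. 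Using $\Ass(M/M_i) \subseteq \bigcup_{j>i}\Ass(\mathcal{M}_j)$, $x$ is also regular on each $M/M_i$, whence $M_i \cap xM = xM_i$ and $\overline{M_i}/\overline{M_{i-1}} \cong \mathcal{M}_i/x\mathcal{M}_i$. The same short exact sequence, together with $\cd(\fa,M_{i-1}) \leq i-1$, produces the surjection $H^i_\fa(M_i) \twoheadrightarrow H^i_\fa(\mathcal{M}_i)$ and dually the injection $D_R(H^i_\fa(\mathcal{M}_i)) \hookrightarrow D_R(H^i_\fa(M_i))$, so the hypothesis transfers and $x$ is a non-zerodivisor on $D_R(H^i_\fa(\mathcal{M}_i))$ as well.

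Now Corollary~\ref{c3}(i), read as an iff, applies to every $\mathcal{M}_i$ with $i \geq 1$: $\mathcal{M}_i$ is either zero or relative Cohen-Macaulay of cohomological dimension $i$ if and only if $\mathcal{M}_i/x\mathcal{M}_i$ is either zero or relative Cohen-Macaulay of cohomological dimension $i-1$. Translating through $L_j/L_{j-1} \cong \mathcal{M}_{j+1}/x\mathcal{M}_{j+1}$ and using Theorem~\ref{p3} to guarantee $L_{c-1} = M/xM$ with $\cd(\fa,M/xM) = c-1$, this becomes: $M$ is RCMF if and only if $\{L_j\}_{j=0}^{c-1}$ is a relative Cohen-Macaulay filtration of $M/xM$, which by Proposition~\ref{prop1} is equivalent to $M/xM$ being RCMF. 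The delicate point --- the main obstacle --- is the backward half of this last equivalence: given only that $M/xM$ is RCMF with cd-filtration $\{N_j\}$, I must verify $L_j = N_j$ for every $j$. The containment $L_j \subseteq N_j$ is immediate from $\cd(\fa,L_j) \leq j$; the reverse inclusion follows by induction on $j$, exploiting the isomorphism $(M/xM)/L_{j-1} \cong (M/M_j)/x(M/M_j)$ together with the RCMF structure on $M/xM$ and the hypothesis that $x$ is regular on $D_R(H^i_\fa(M_i))$ for every $i$, which jointly preclude any strictly larger submodule of cohomological dimension $\leq j$ from appearing in $N_j$ beyond $L_j$.
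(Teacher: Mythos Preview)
Your forward direction is essentially the paper's argument, and your snake-lemma/maximality proof that $x$ is regular on each $\mathcal{M}_i$ is a clean alternative to the paper's appeal to \cite[Proposition~2.6]{ASN}. The identification $\overline{M_i}/\overline{M_{i-1}}\cong\mathcal{M}_i/x\mathcal{M}_i$ and the transfer of the Matlis-dual regularity to $D_R(H^i_\fa(\mathcal{M}_i))$ via the surjection $H^i_\fa(M_i)\twoheadrightarrow H^i_\fa(\mathcal{M}_i)$ are fine, and Proposition~\ref{prop1} then finishes the forward implication exactly as in the paper.

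The gap is in the converse. You correctly isolate the obstacle---proving $L_j=N_j$---but your justification is not a proof. Saying the RCMF structure on $M/xM$ and the regularity of $x$ on $D_R(H^i_\fa(M_i))$ ``jointly preclude any strictly larger submodule'' is hand-waving: you never exhibit a mechanism by which a submodule $K\subseteq (M/M_{j+1})/x(M/M_{j+1})$ with $\cd(\fa,K)\le j$ is forced to vanish. The inductive reduction you sketch unwinds to the same question one level up (with $M$ replaced by $M/M_{j+1}$), so without an independent argument the induction does not close. This is a genuine missing idea, not a routine verification.

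The paper avoids this difficulty by running the converse the other way: rather than pushing the cd-filtration of $M$ down to $M/xM$ and comparing, it \emph{pulls back} the cd-filtration $\{M'_i\}$ of $M/xM$ to a filtration of $M$ (taking preimages, with bottom term $0$), checks that the successive quotients are relative Cohen--Macaulay of the right cohomological dimension via Theorem~\ref{p3} and Corollary~\ref{c3}, and then invokes Proposition~\ref{prop1} to identify this pulled-back filtration with the cd-filtration of $M$. That route sidesteps the need to prove $L_j=N_j$ directly. Your remark for part~(ii), that the Matlis-dual hypotheses transfer to $M/x_1M$ ``routinely,'' is also optimistic: once the converse in (i) actually identifies the cd-filtration of $M/x_1M$, this becomes tractable, but as written it rests on the unproved equality $L_j=N_j$.
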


\begin{proof}
For part (i), let $\mathcal{M}_{i}=M_i/M_{i-1}$. First, as $x\in\fa$ is an $M$-regular element, we show that $M_{0}=0$. By the notion of \cite[Proposition 2.3]{ASN}, we know that $M_{0}=H^{0}_{\fa_{0}}(M)$ where $\fa_{0}=\prod_{\cd(\fa,R/ \fp_{j})=0}\fp_{j}$. Let $\cd(\fa,R/ \fp_{j})=0$. Then we have $H^{0}_{\fa}(R/ \fp_{j})\neq 0$ and so there exists $r\in R\setminus \fp_{j}$ and $n\in\mathbb{N}$ such that $r\fa^{n}\subseteq \fp_{j}$. Thus $\fa\subseteq \fp_{j}$ that is impossible because $\fp_{j}\in\Ass_{R}(M)$. Therefore $\{\fp_{j}\mid \cd(\fa,R/ \fp_{j})=0\}=\emptyset$ and hence $H^{0}_{\fa_{0}}(M)=H^{0}_{R}(M)=0$. That is, $M_{0}=0$ as we claimed. Now, let $1\leq i\leq c$. Then $\Ass_{R}(\mathcal{M}_{i})\subseteq \Ass_{R}(M)$ by \cite[Proposition 2.6]{ASN}. Therefore $x$ is also $\mathcal{M}_{i}$-regular element for all $i\geq 1$. Also, it is easy to see that $M_{i}\cap xM=xM_{i}$ and $xM_i\cap M_{i-1}=xM_{i-1}$ for all $1\leq i \leq c$. (Note that if $m\in M$ and $xm\in M_i$, then $m\in M_i$ because $x$ is a non-zerodivisor on $\mathcal{M}_i$. Hence  $xM\cap M_i\subseteq xM_i$.) Now, let $M$ be an RCMF module w.r.t $\fa$. Then we have 

\[\begin{array}{ll}\label{a1}
\mathcal{M}_{i}/x\mathcal{M}_{i}\cong M_i/(xM_i+M_{i-1}) & \cong (M_i/xM_i)/(M_{i-1}/(xM_i \cap M_{i-1})) \\ & \cong (M_{i}/(M_{i}\cap xM))/(M_{i-1}/(xM\cap M_{i-1})) \\ & \cong ((M_{i},xM)/xM)/((M_{i-1},xM)/xM).\end{array}\]

Therefore as $x$ is $\mathcal{M}_i$-regular element, $\{(M_{i+1},xM)/xM\}^{c-1}_{i=0}$ is a relative Cohen-Macaulay filtration of $M/xM$ w.r.t $\fa$ by Corollary~\ref{c3}. Notice that $H^{i}_{\fa}(M_i)\cong H^{i}_{\fa}(\mathcal{M}_i)$ for all $i$. Therefore by virtue of Proposition~\ref{prop1}, $M/xM$ is an RCMF module w.r.t $\fa$. Moreover, by Theorem~\ref{p3}, $M/xM$ is a $(c-1)$-cohomological dimensional RCMF module w.r.t $\fa$.
Conversely, suppose that $M/xM$ is an RCMF module w.r.t $\fa$. Then the cohomological dimension filtration $\{M'_{i}\}^{c-1}_{i=0}$ of $M/xM$ has the property that $\mathcal {M'}_{i}=M'_{i}/M'_{i-1}$ is either zero or $i$-cohomological dimensional relative Cohen-Macaulay module w.r.t $\fa$. Let $M_{i+1}$ denote the preimage of $M'_{i}$ in $M$,  $\mathcal{M}_{i+1}:=M_{i+1}/M_i$ for $0\leq i\leq c-1$, and $M_{0}:=0$. Since $\Ass_R\mathcal{M'}_i\subseteq\Ass_R M$ by \cite[Proposition 2.6]{ASN}, it deduces that $\Ass_R\mathcal{M}_i\subseteq\Ass_R M$ and so $x$ is an $\mathcal{M}_{i}$-regular element. By the isomorphisms as we mentioned in above, $\mathcal{M}_i/x\mathcal{M}_i\cong M'_{i-1}/M'_{i-2}$ for all $1\leq i\leq c$, as all $M_i$'s contain $xM$ (here put $M'_{-1}:=0$). Thus $\mathcal{M}_i/x\mathcal{M}_i$ is relative Cohen-Macaulay w.r.t $\fa$ and $\cd(\fa,\mathcal{M}_i/x\mathcal{M}_i)=\cd(\fa,M'_{i-1}/M'_{i-2})$ for all $1\leq i\leq c$. Now, by Theorem~\ref{p3} and Corollary~\ref{c3}, $\mathcal{M}_i$ is $i$-cohomological dimensional relative Cohen-Macaulay module w.r.t $\fa$ as $x$ is non-zerodivisor on $\mathcal{M}_i$. Therefore $M$ is RCMF module w.r.t $\fa$.\\ 
Part (ii) follows by part (i) and applying the induction on $n$.
\end{proof}

\medskip
Another property of RCMF modules is about localization behaviour. It is clear that if $M$ is a relative Cohen-Macaulay $R$-module w.r.t $\fa$ and $\fp\in \Supp_{R}(M)\cap\V(\fa)$, then $M_{\fp}$ is relative Cohen-Macaulay $R_{\fp}$-module w.r.t $\fa R_{\fp}$ and $\cd(\fa,M)=\cd(\fa R_{\fp},M_{\fp})$.

\begin{prop}
Let $M$ be an RCMF $R$-module w.r.t $\fa$. Then $M_{\fp}$ is an RCMF $R_{\fp}$-module w.r.t $\fa R_{\fp}$ for any prime ideal $\fp\in \Supp_{R}(M)\cap\V(\fa)$.
\end{prop}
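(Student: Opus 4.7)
The plan is to localize the cd-filtration $\mathcal{M}=\{M_i\}_{i=0}^{c}$ of $M$ at $\fp$ and show that (after discarding trailing redundant steps) the resulting chain is a relative Cohen-Macaulay filtration of $M_\fp$ with respect to $\fa R_\fp$. Proposition~\ref{prop1} will then force this chain to be the cd-filtration of $M_\fp$, and RCMFness of $M_\fp$ follows directly from Definition~\ref{defin}.

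First, localize each short exact sequence $0\to M_{i-1}\to M_i\to \mathcal{M}_i\to 0$ at $\fp$, where $\mathcal{M}_i=M_i/M_{i-1}$. Because $M$ is RCMF, each $\mathcal{M}_i$ is either zero or an $i$-cohomological dimensional relative Cohen-Macaulay $R$-module w.r.t.~$\fa$. If $\fp\notin\Supp_R(\mathcal{M}_i)$ then $(\mathcal{M}_i)_\fp=0$ and $(M_i)_\fp=(M_{i-1})_\fp$. Otherwise $\fp\in\Supp_R(\mathcal{M}_i)\cap\V(\fa)$, and by the observation recalled immediately before the statement, $(\mathcal{M}_i)_\fp$ is an $i$-cohomological dimensional relative Cohen-Macaulay $R_\fp$-module w.r.t.~$\fa R_\fp$.

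Next, let $j$ be the largest index with $(\mathcal{M}_j)_\fp\neq 0$; such a $j$ exists because $\fp\in\Supp_R(M)$ forces $(M_c)_\fp=M_\fp\neq 0$. A short induction on $i$, using the localized long exact sequences and the facts that $\cd(\fa R_\fp,(\mathcal{M}_i)_\fp)\leq i$ and that the only non-vanishing local cohomology of $(\mathcal{M}_j)_\fp$ lies in degree $j$, yields $\cd(\fa R_\fp,(M_i)_\fp)\leq j$ for every $i\leq c$ and $H^{j}_{\fa R_\fp}((M_j)_\fp)\twoheadrightarrow H^{j}_{\fa R_\fp}((\mathcal{M}_j)_\fp)\neq 0$. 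Combined with $(M_c)_\fp=(M_j)_\fp=M_\fp$, this gives $\cd(\fa R_\fp,M_\fp)=j$, so that $\{(M_i)_\fp\}_{i=0}^{j}$ is a relative Cohen-Macaulay filtration of $M_\fp$ w.r.t.~$\fa R_\fp$ of the correct length. Applying Proposition~\ref{prop1} concludes the argument.

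The main obstacle is the bookkeeping in the previous paragraph: the cohomological dimension can drop under localization, so one must identify the correct top index $j$ and check that the ``gaps'' (indices $i$ with $(\mathcal{M}_i)_\fp=0$) do not spoil the filtration. This rests on two points that have already been set up in the paper, namely that localization at $\fp\in\Supp\cap\V(\fa)$ preserves both relative Cohen-Macaulayness and the cohomological dimension of a relative Cohen-Macaulay module, and that a relative Cohen-Macaulay filtration is automatically the cd-filtration (Proposition~\ref{prop1}).
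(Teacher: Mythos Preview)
Your proof is correct and follows essentially the same approach as the paper: localize the cd-filtration, check that the quotients remain either zero or relative Cohen--Macaulay of the right cohomological dimension, and invoke Proposition~\ref{prop1}. Your bookkeeping with the top index $j$ is in fact more careful than the paper's argument, which tacitly assumes $\cd(\fa R_\fp,M_\fp)=c$ without verifying that $\fp\in\Supp_R(M/M_{c-1})$.
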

\begin{proof} Let $\mathcal{M}=\{M_{i}\}^{c}_{i=0}$ denote the cd-filtration of $M$, where $c=\cd(\fa,M)$. Let $\fp\in\V(\fa)$. Consider the filtration $\{M_{i}\otimes_{R} R_{\fp}\}^{c}_{i=0}$. We claim that this is a cd-filtration of $M_\fp$. First, as
$$
\cd(\fa,M)=\max\{\cd(\fa,M_{c-1}),\cd(\fa,M/M_{c-1})\},
$$
and
$$
\cd(\fa R_{\fp},M_{\fp})=\max\{\cd(\fa R_{\fp},(M_{c-1})_{\fp}),\cd(\fa R_{\fp},(M/M_{c-1})_{\fp})\},
$$
and $M/M_{c-1}$ is relative Cohen-Macaulay w.r.t $\fa$, we have $\cd(\fa R_{\fp},(M/M_{c-1})_{\fp})=\cd(\fa,M/M_{c-1})=c$ and $\cd(\fa R_{\fp},(M_{c-1})_{\fp})\leq \cd(\fa,M)=c$. Thus $\cd(\fa R_{\fp},M_{\fp})=c$. Further, $M_{i}/M_{i-1}\otimes_{R} R_{\fp}$ is either zero or a relative Cohen-Macaulay $R_{\fp}$-module w.r.t $\fa R_{\fp}$ of cohomological dimension $i$ for all $1\leq i\leq c$. Therefore in view of Proposition~\ref{prop1}, the claim is proved.
\end{proof}

\medskip
We can also show that passage to completion preserves the property of RCMF, as illustrated below.
\begin{prop}
Let $M$ be a finite RCMF $R$-module w.r.t $\fa$. Then $\widehat{M}$ is an RCMF $\widehat{R}$-module w.r.t $\fa\widehat{R}$, where $\widehat{}$ is the $\fa$-adic completion.
\end{prop}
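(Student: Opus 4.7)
The plan is to show that the $\fa$-adic completion of the cd-filtration $\mathcal{M}=\{M_i\}_{i=0}^{c}$ of $M$ yields a relative Cohen-Macaulay filtration of $\widehat{M}$ with respect to $\fa\widehat{R}$; Proposition~\ref{prop1} will then identify it with the cd-filtration of $\widehat{M}$ and force $\widehat{M}$ to be RCMF.

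First I would verify that cohomological dimension and grade with respect to $\fa$ are preserved under $\fa$-adic completion for finite modules. For any finite $R$-module $N$, flatness of $\widehat{R}$ over $R$ gives the base change isomorphism $H^{i}_{\fa}(N)\otimes_R\widehat{R}\cong H^{i}_{\fa\widehat{R}}(\widehat{N})$. Since $H^{i}_{\fa}(N)$ is $\fa$-torsion, it is a direct limit of $R/\fa^n$-modules, and using $\widehat{R}/\fa^n\widehat{R}\cong R/\fa^n$ one obtains $H^{i}_{\fa}(N)\otimes_R\widehat{R}\cong H^{i}_{\fa}(N)$. Combining the two isomorphisms gives $H^{i}_{\fa\widehat{R}}(\widehat{N})\cong H^{i}_{\fa}(N)$ for every $i$, hence $\cd(\fa\widehat{R},\widehat{N})=\cd(\fa,N)$ and $\grade(\fa\widehat{R},\widehat{N})=\grade(\fa,N)$; in particular $N$ is relative Cohen-Macaulay w.r.t $\fa$ of cohomological dimension $i$ if and only if $\widehat{N}$ is relative Cohen-Macaulay w.r.t $\fa\widehat{R}$ of cohomological dimension $i$.

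Next I would apply exactness of $-\otimes_R\widehat{R}$ on finite $R$-modules to the short exact sequences $0\to M_{i-1}\to M_i\to M_i/M_{i-1}\to 0$, producing an increasing chain $\widehat{M_0}\subseteq\widehat{M_1}\subseteq\cdots\subseteq\widehat{M_c}=\widehat{M}$ of $\widehat{R}$-submodules of $\widehat{M}$ with successive quotients $\widehat{M_i}/\widehat{M_{i-1}}\cong\widehat{M_i/M_{i-1}}$. Since $M$ is RCMF, each $M_i/M_{i-1}$ is either zero or an $i$-cohomological-dimensional relative Cohen-Macaulay module w.r.t $\fa$, so by the previous step $\widehat{M_i}/\widehat{M_{i-1}}$ is either zero or an $i$-cohomological-dimensional relative Cohen-Macaulay $\widehat{R}$-module w.r.t $\fa\widehat{R}$, while $\cd(\fa\widehat{R},\widehat{M})=c$. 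Thus $\{\widehat{M_i}\}_{i=0}^{c}$ is a relative Cohen-Macaulay filtration of $\widehat{M}$ w.r.t $\fa\widehat{R}$; by Proposition~\ref{prop1} it coincides with the cd-filtration of $\widehat{M}$, so $\widehat{M}$ is RCMF.

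The delicate point will be the preservation of cohomological dimension and grade under $\fa$-adic completion, because $R\to\widehat{R}$ is flat but not in general faithfully flat, so Remark~\ref{r8} does not apply directly; the $\fa$-torsion argument above is what circumvents this obstruction.
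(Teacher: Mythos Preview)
Your proposal is correct and follows the same overall strategy as the paper: complete the cd-filtration of $M$, use flat base change to verify that the completed filtration is a relative Cohen--Macaulay filtration of $\widehat{M}$ with respect to $\fa\widehat{R}$, and then invoke Proposition~\ref{prop1}.

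One point is worth highlighting. The paper's proof simply cites the Flat Base Change Theorem and moves on, implicitly treating the passage $R\to\widehat{R}$ as in Remark~\ref{r8}. You correctly observe that for $\fa$-adic completion the map $R\to\widehat{R}$ is flat but not faithfully flat in general, so Remark~\ref{r8} does not apply directly and one cannot immediately conclude that $H^{i}_{\fa}(N)\otimes_R\widehat{R}=0$ forces $H^{i}_{\fa}(N)=0$. Your $\fa$-torsion argument, using $\widehat{R}/\fa^{n}\widehat{R}\cong R/\fa^{n}$ to obtain $H^{i}_{\fa}(N)\otimes_R\widehat{R}\cong H^{i}_{\fa}(N)$, is exactly the right way to close this gap, and it makes your write-up more complete than the paper's on this point.
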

\begin{proof} Let $\{M_{i}\}^{c}_{i=0}$ denote the relative Cohen-Macaulay filtration of $M$, where $c=\cd(\fa,M)$. Then $\{\widehat{M_{i}}\}^{c}_{i=0}$ is a relative Cohen-Macaulay filtration of the $\widehat{R}$-module $\widehat{M}$ w.r.t $\fa\widehat{R}$ by Flate Base Change Theorem \cite[Theorem 4.3.2]{BSH}. Thus by Proposition~\ref{prop1}, $\widehat{M}$ is an RCMF module over $\widehat{R}$ w.r.t $\fa\widehat{R}$.
\end{proof}

\medskip
The other main result of this section is a characterization of the cd-filtration of $M$ in terms of associated prime ideals of its factors. For all $i$, set $\Ass^{i}_R(M)=\{\fp\in\Ass_R(M)\mid \cd(\fa,R/\fp)=i\}$.
\begin{thm}\label{Th4}
Let $\mathcal{M}=\{M_i\}_{i=0}^{c}$ be a filtration of the finite $R$-module $M$ and $\cd(\fa,M_{0})=0$. The following conditions are equivalent:
\begin{itemize}
\item[(i)] $\Ass_R(M_{i}/M_{i-1})=\Ass^{i}_R(M)$ for all $1\leq i\leq c$;
\item[(ii)] $\mathcal{M}$ is the cd-filtration of $M$.
\end{itemize}
\end{thm}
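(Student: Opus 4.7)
For (ii) $\Rightarrow$ (i), I apply \cite[Proposition 2.6]{ASN} (as in the proof of Proposition~\ref{prop1}) to obtain $\Ass_R(M_i) = \{\fp \in \Ass_R(M) \mid \cd(\fa, R/\fp) \le i\}$, whence $\Ass_R(M_i) \setminus \Ass_R(M_{i-1}) = \Ass^i_R(M)$. Combined with the standard bound $\Ass_R(M_i) \subseteq \Ass_R(M_{i-1}) \cup \Ass_R(M_i/M_{i-1})$ coming from the short exact sequence $0 \to M_{i-1} \to M_i \to M_i/M_{i-1} \to 0$, this yields $\Ass^i_R(M) \subseteq \Ass_R(M_i/M_{i-1})$ without work.

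For the reverse inclusion, fix $\fp \in \Ass_R(M_i/M_{i-1})$. The embedding $R/\fp \hookrightarrow M_i/M_{i-1}$ together with $\cd(\fa, M_i) \le i$ gives $\cd(\fa, R/\fp) \le i$. If one had $\cd(\fa, R/\fp) \le i-1$, then for $m \in M \setminus M_{i-1}$ with $(M_{i-1} : m) = \fp$, the submodule $N := Rm + M_{i-1}$ would sit in $0 \to M_{i-1} \to N \to R/\fp \to 0$, and Remark~\ref{R2} would force $\cd(\fa, N) \le i-1$ with $N \supsetneq M_{i-1}$, contradicting the maximality of $M_{i-1}$. Hence $\cd(\fa, R/\fp) = i$. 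The remaining step, $\fp \in \Ass_R(M)$, is the crux of the argument: I invoke \cite[Proposition 2.3]{ASN} to realize $M_{i-1}$ as $\Gamma_I(M)$ for a suitable ideal $I$, whence primary decomposition yields $\Ass_R(M/M_{i-1}) \subseteq \Ass_R(M)$, and the equality in (i) follows.

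For (i) $\Rightarrow$ (ii), I verify directly that each $M_i$ is the largest submodule of $M$ with $\cd(\fa, \cdot) \le i$. By induction starting from the hypothesis $\cd(\fa, M_0) = 0$: each nonzero factor $M_i/M_{i-1}$ has all associated primes in $\Ass^i_R(M)$ (of cd $= i$), so $\cd(\fa, M_i/M_{i-1}) = i$, and Remark~\ref{R2} delivers $\cd(\fa, M_i) \le i$. For maximality, given any $N \subseteq M$ with $\cd(\fa, N) \le i$, set $Q := (N + M_i)/M_i \cong N/(N \cap M_i) \subseteq M/M_i$. On one hand, Remark~\ref{R2} bounds $\cd(\fa, Q) \le \cd(\fa, N) \le i$. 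On the other, iterating the short exact sequences arising from the filtration $\{M_j/M_i\}_{j \ge i}$ of $M/M_i$ (whose subquotients are $M_j/M_{j-1}$ with associated primes of cd $j > i$) shows every $\fp \in \Ass_R(M/M_i)$ satisfies $\cd(\fa, R/\fp) > i$. If $Q \ne 0$, an associated prime of $Q$ would satisfy both bounds simultaneously, a contradiction; hence $N \subseteq M_i$, so $\mathcal{M}$ is the cd-filtration. The sole obstacle in the whole argument is the step $\fp \in \Ass_R(M)$ in direction (ii) $\Rightarrow$ (i), which genuinely requires the torsion description of $M_{i-1}$ from \cite[Proposition 2.3]{ASN} rather than just its defining maximality property.
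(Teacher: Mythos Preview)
Your proposal is correct. The two directions take slightly different routes from the paper's own proof.

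For (ii) $\Rightarrow$ (i), the paper dispatches this direction in one line by citing \cite[Proposition~2.6(iii)]{ASN}, whereas you unpack the argument yourself using \cite[Propositions~2.3 and 2.6]{ASN}: this is fine, and your use of the torsion description $M_{i-1}=\Gamma_{\fa_{i-1}}(M)$ to secure $\Ass_R(M/M_{i-1})\subseteq\Ass_R(M)$ is exactly the ingredient hidden inside that citation.

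For (i) $\Rightarrow$ (ii), the strategies genuinely differ. The paper first proves an auxiliary disjointness claim $\Ass_R(M_{i-1})\cap\Ass_R(M_i/M_{i-1})=\emptyset$ via a descending argument down to $M_0$ (using the hypothesis $\cd(\fa,M_0)=0$ as the terminal contradiction), and from this deduces $\cd(\fa,M_{c-1})\le c-1$; maximality is then handled at level $c-1$ and the whole argument is iterated descendingly. You instead go \emph{ascending}: an immediate induction using the max formula from \cite[Corollary~2.3(i)]{DNT} gives $\cd(\fa,M_i)\le i$ for all $i$ at once, without ever needing the disjointness claim, and you then prove maximality for every $i$ simultaneously by showing $\Ass_R(M/M_i)\subseteq\bigcup_{j>i}\Ass^j_R(M)$. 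Your route is more economical---it bypasses the paper's preliminary claim entirely---while the paper's descending structure makes the role of the base hypothesis $\cd(\fa,M_0)=0$ more visible. Both arguments ultimately rest on the same two facts: $\cd(\fa,\,\cdot\,)$ of an extension is the max of the pieces, and $\cd(\fa,N)=\max_{\fp\in\Ass_R(N)}\cd(\fa,R/\fp)$ for finite $N$.
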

\begin{proof}
By virtue of \cite[Proposition 2.6 (iii)]{ASN}, we only have to prove the implication $(i)\Rightarrow (ii)$. First, we claim that 
$$
\Ass_R(M_{i-1})\cap\Ass_R(M_{i}/M_{i-1})=\emptyset \ \text{for all}\  1\leq i\leq c.
$$
Contrarily, assume that for some $1\leq i\leq c$, there is $\fp\in\Ass_R(M_{i-1})\cap \Ass_R(M_{i}/M_{i-1})$. Then $\cd(\fa,M_{i-1})\geq i$ by (i). If $i>1$, by hypothesis, $\Ass_R(M_{i-1}/M_{i-2})=\Ass^{i-1}_R(M)$ and so $\fp\notin \Ass_R(M_{i-1}/M_{i-2})$. Thus, considering the exact sequence $0\longrightarrow M_{i-2}\longrightarrow M_{i-1}\longrightarrow M_{i-1}/M_{i-2}\longrightarrow 0$, we have $\fp\in \Ass_R(M_{i-2})$. As $\cd(\fa,R/ \fp)=i$, we have $\cd(\fa,M_{i-2})\geq i$. By repeating this descending process, 
$$
\cd(\fa,M_{0}), \cd(\fa,M_{1}), \ldots , \cd(\fa,M_{i-2}), \cd(\fa,M_{i-1})
$$
are all not less than $i$. Hence $\cd(\fa,M_{0})\geq i>0$ which is a contradiction. Now, consider  the exact sequence $0\longrightarrow M_{c-1}\longrightarrow M\longrightarrow M/M_{c-1}\longrightarrow 0$, we have $\cd(\fa,M_{c-1})\leq c-1$ as $\Ass_R(M_{c-1})\cap\Ass_R(M/M_{c-1})=\emptyset$. Now, let $N$ be the largest submodule of $M$ such that $\cd(\fa,N)\leq c-1$ and $\fp\in\Ass_R(N/M_{c-1})$. Since $\Ass_R(N/M_{c-1})\subseteq\Ass_R^{c}(M)$, we have $\cd(\fa,R/\fp)=c$. But  $\fp\in\Supp_R(N/M_{c-1})\subseteq\Supp_R(N)$ and so $\cd(\fa,R/\fp)\leq\cd(\fa,N)\leq c-1$ which is impossible. Therefore $\Ass_R(N/M_{c-1})=\emptyset$ and $M_{c-1}$ is the largest submodule of $M$ such that $\cd(\fa,M_{c-1})\leq c-1$. Now, descendingly, we proceed this method to prove that $\mathcal{M}$ is the cd-filtration of $M$.
\end{proof}

We end this section by a consequence of the above theorem which gives us a cd-filtration for certain modules. 
\begin{cor} \label{c5} (compare \cite[Proposition 2.3]{ASN})
Let $\cap^{n}_{i=1}N_{i}$ be a reduced primary decomposition of $(0)$ in $M$, where $N_{i}$ is $\fp_{i}$-primary, and $M_{i}=\cap_{\cd(\fa,R/\fp_{j})>i} N_{j}$ for all $0\leq i\leq c=\cd(\fa,M)$. Assume that $\cd(\fa,\cap_{\cd(\fa,R/\fp_{j})>0} N_{j})=0$. Then $\{M_{i}\}^{c}_{i=0}$ is the cd-filtration of $M$.
\end{cor}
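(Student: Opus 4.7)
The plan is to reduce everything to Theorem~\ref{Th4}: once we verify that $\{M_i\}_{i=0}^c$ is a genuine filtration with $\cd(\fa,M_0)=0$ and that $\Ass_R(M_i/M_{i-1}) = \Ass^i_R(M)$ for each $1 \leq i \leq c$, the conclusion follows immediately, since Theorem~\ref{Th4} converts the second condition into the defining property of the cd-filtration.

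First I would introduce the index sets $A_i := \{k : \cd(\fa, R/\fp_k) > i\}$ and $S_i := \{k : \cd(\fa, R/\fp_k) = i\}$, so that $M_i = \bigcap_{k \in A_i} N_k$. Since $A_c \subseteq A_{c-1} \subseteq \cdots \subseteq A_0$, the chain $M_0 \subseteq M_1 \subseteq \cdots \subseteq M_c$ drops out at once, and $A_c = \emptyset$ gives $M_c = M$, so $\{M_i\}_{i=0}^{c}$ is indeed a filtration. The condition $\cd(\fa,M_0)=0$ is exactly the standing hypothesis of the corollary.

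The core step is computing $\Ass_R(M_i/M_{i-1})$. Since $A_{i-1} = A_i \sqcup S_i$, one has $M_{i-1} = M_i \cap \bigcap_{k \in S_i} N_k$, and the diagonal map $m \mapsto (m + N_k)_{k \in S_i}$ induces an injection
\[
M_i/M_{i-1} \hookrightarrow \bigoplus_{k \in S_i} M/N_k.
\]
Because $\Ass_R(M/N_k) = \{\fp_k\}$ for each $k \in S_i$, the inclusion $\Ass_R(M_i/M_{i-1}) \subseteq \Ass^i_R(M)$ is immediate.

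The reverse inclusion is where the real work lies and this is the step I expect to be the main obstacle. For a fixed $\fp_j \in \Ass^i_R(M)$, reducedness of the primary decomposition yields $y \in \bigcap_{k \neq j} N_k$ with $y \notin N_j$. Since $j \notin A_i$ but $j \in A_{i-1}$, the element $y$ sits in $M_i \setminus M_{i-1}$, and a direct check (using $y \in N_k$ for $k \neq j$) gives the identity $\Ann_R(y + M_{i-1}) = \Ann_R(y + N_j)$. The subtlety is that $\Ann_R(y+N_j)$ is \emph{a priori} only $\fp_j$-primary rather than equal to $\fp_j$; to pin down the annihilator on the nose, I would replace $y$ by $ry$ for a suitable $r \in R$. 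Namely, $R(y+N_j)$ is a nonzero cyclic submodule of the $\fp_j$-coprimary module $M/N_j$, so its unique associated prime is $\fp_j$ and there exists $r \in R$ with $\Ann_R(ry + N_j) = \fp_j$; since $ry$ still lies in $\bigcap_{k \neq j} N_k$, the identity above applies to $ry$ as well and yields $\Ann_R(ry + M_{i-1}) = \fp_j$, forcing $\fp_j \in \Ass_R(M_i/M_{i-1})$. With both hypotheses of Theorem~\ref{Th4} now verified, the corollary follows.
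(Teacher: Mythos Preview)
Your proposal is correct and follows essentially the same route as the paper: both reduce to Theorem~\ref{Th4}, establish $\Ass_R(M_i/M_{i-1})\subseteq\Ass_R^i(M)$ via an embedding into $\bigoplus_{k\in S_i} M/N_k$, and for the reverse inclusion pick an element of $\bigcap_{k\neq j}N_k\setminus N_j$ (the paper uses the slightly larger set $\bigcap_{k\in A_{i-1},\,k\neq j}N_k$) and then adjust it to have annihilator exactly $\fp_j$. The only cosmetic difference is that you invoke the abstract fact that a nonzero submodule of a $\fp_j$-coprimary module has $\fp_j$ as an associated prime, whereas the paper makes this explicit via the minimal-power trick $y\in\fp_j^{\,s}x$.
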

\begin{proof}
Let $1\leq i\leq c$. It is easy to see that $M_{i-1}=M_i \cap L_i$, where $L_i$ is the intersection of all $N_j$'s such that $\cd(\fa,R/\fp_j)=i$. By rewriting the indices, let $L_i=N_1 \cap\ldots\cap N_m$. By Theorem~\ref{Th4}, we need to show that $\Ass_R(M_i/M_{i-1})=\{\fp_1,\ldots,\fp_m\}$. First, we note that $\Ass_R(M_i/M_{i-1})=\Ass_R(M_i+L_i/L_i)\subseteq\Ass_R(M/L_i)$. Also, $\Ass_R(M/L_i)=\Ass_R(\oplus_{j=1}^{n}M/N_j)=\{\fp_1,\ldots,\fp_m\}$. Thus, it is enough to show that $\{\fp_1,\ldots,\fp_m\}\subseteq\Ass_R(M_i/M_{i-1})$. We have $M_{i-1}=L_i\cap L_{i+1}\cap\ldots\cap L_c$ and $M_i=L_{i+1}\cap L_{i+2}\cap\ldots\cap L_c$. Let $1\leq r\leq m$. We show that ${\fp}_r\in\Ass_R(M_i/M_{i-1})$. As $(0)=\cap^{n}_{j=1}N_{j}$, is a reduced primary decomposition, it deduces that 
\begin{equation}\label{ee}
M_{i-1}\subsetneqq(N_1\cap\ldots\cap\widehat{N_r}\cap\ldots\cap N_m)\cap L_{i+1}\cap\ldots\cap L_c.
\end{equation}
For convenience, we denote the right side of (1) by $A$ for the rest. So there exists $x\in A$ such that $x\notin M_{i-1}$. Notice that $(M_{i-1}:x)=(N_r:x)$. Now, as $N_r$ is ${\fp}_r$-primary,  there exists $t>0$ such that ${{\fp}_r}^t M\subseteq N_r$. Hence ${{\fp}_r}^t x\subseteq M_{i-1}$. Suppose that $s\geq 0$ is the least integer such that ${\fp_r}^{s+1}x\subseteq M_{i-1}$ and ${\fp_r}^s\nsubseteq M_{i-1}$. This implies that there exists $y\in{\fp_r}^s x$ such that $y\notin M_{i-1}$. Now, it is easy to see that $\fp_r=(M_{i-1}:y)$, i.e., $\fp_r\in\Ass_R(M_i/M_{i-1})$. This completes the proof.
\end{proof}
%%%%%%%%%%%%%%%%%%%%%%%%%%%%%%%%%%%%%%%%%%%%%%%%%%

\section{Relative Cohen-Macaulayness in rings and modules}
In this section, we prove some results concerning relative Cohen-Macaulay rings and modules. We begin by determining two classes of modules for which relative Cohen-Macaulayness is equivalent for rings and modules. 

\begin{dfn}(see \cite{ES}) An $R$-module $M$ is called {\it multiplication} if for every submodule $N$ of $M$ there exists an ideal $\fa$ of $R$ such that $N=\fa M$. Moreover, if $\Ann_R M=0$, then $M$ is called faithful multiplication.
\end{dfn}

\medskip
\begin{cor}\label{c1}
Let $M$ be a faithful multiplication $R$-module. Then $M$ is relative Cohen-Macaulay w.r.t $\fa$ if and only if $R$ is relative Cohen-Macaulay w.r.t $\fa$.
\end{cor}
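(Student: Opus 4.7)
The plan is to reduce the equivalence to proving two separate equalities for a faithful multiplication module:
\[
\grade(\fa,M)=\grade(\fa,R) \quad\text{and}\quad \cd(\fa,M)=\cd(\fa,R).
\]
Once these are in place, the corollary is immediate from the defining criterion that $M$ (respectively $R$) is relative Cohen-Macaulay w.r.t $\fa$ precisely when $\grade(\fa,M)=\cd(\fa,M)$ (respectively $\grade(\fa,R)=\cd(\fa,R)$).

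To establish both equalities in one stroke, I would first show that a finitely generated faithful multiplication $R$-module $M$ is \emph{faithfully flat}. Being multiplication is preserved under localization, and faithfulness is preserved for finitely generated modules since $\Ann_{R_{\fp}}(M_{\fp})=(\Ann_{R}M)_{\fp}=0$ for every $\fp\in\Spec(R)$. Over the local ring $R_{\fp}$, a finitely generated multiplication module is cyclic: writing $M_{\fp}=\sum_{i=1}^{n}R_{\fp}m_{i}$ and $R_{\fp}m_{i}=I_{i}M_{\fp}$, one obtains $M_{\fp}=(\sum I_{i})M_{\fp}$, which by Nakayama forces $\sum I_{i}=R_{\fp}$; since $R_{\fp}$ is local, some $I_{i}$ equals $R_{\fp}$, so $M_{\fp}=R_{\fp}m_{i}$. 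Being cyclic and faithful, $M_{\fp}\cong R_{\fp}$. Hence $M$ is locally isomorphic to $R$, in particular projective of rank one and therefore flat; and since $\Supp_{R}(M)=\V(\Ann_{R}M)=\Spec(R)$, it is faithfully flat.

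With faithful flatness secured, both equalities follow formally. Applying $\Hom_R(-,M)$ to a resolution of $R/\fa$ by finitely generated free modules and using flatness yields the natural isomorphisms
\[
\Ext^{i}_{R}(R/\fa,M)\cong\Ext^{i}_{R}(R/\fa,R)\otimes_{R}M,
\]
and passing to the direct limit over $\fa^{n}$ produces
\[
H^{i}_{\fa}(M)\cong H^{i}_{\fa}(R)\otimes_{R}M
\]
for every $i\geq 0$. Faithful flatness translates vanishing between the two sides of each isomorphism, giving $\grade(\fa,M)=\grade(\fa,R)$ and $\cd(\fa,M)=\cd(\fa,R)$ simultaneously. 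Combining these with the grade-equals-cohomological-dimension characterization of relative Cohen-Macaulayness completes the proof.

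The main obstacle is the cyclicity/faithful-flatness reduction: one has to verify carefully that the multiplication and faithfulness hypotheses really do descend to $R_{\fp}$ (using finite generation), and then that the local structure $M_{\fp}\cong R_{\fp}$ assembles into a genuine faithfully flat module globally. This is the step that converts the purely ideal-theoretic hypothesis ``faithful multiplication'' into the homological tool ``faithfully flat'' that makes the cohomological comparison transparent.
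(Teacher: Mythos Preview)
Your argument is correct, but it takes a different route from the paper. The paper simply cites \cite[Theorem~2.2]{AFM} to obtain $\Supp_R(M)=\Spec(R)$ and $\grade(\fa,M)=\grade(\fa,R)$, and then invokes \cite[Theorem~2.2]{DNT} (which says $\Supp_R(M)=\Supp_R(N)$ forces $\cd(\fa,M)=\cd(\fa,N)$) to get $\cd(\fa,M)=\cd(\fa,R)$. You instead prove the sharper structural fact that a faithful multiplication module over a Noetherian ring is invertible (locally free of rank one), hence faithfully flat, and then read off the isomorphisms $H^{i}_{\fa}(M)\cong H^{i}_{\fa}(R)\otimes_R M$ and $\Ext^{i}_{R}(R/\fa,M)\cong\Ext^{i}_{R}(R/\fa,R)\otimes_R M$. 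Your approach is more self-contained and actually yields more: it explains \emph{why} the cited equalities hold and gives module-level isomorphisms rather than mere numerical coincidences. The paper's approach is shorter because it outsources the work, and the \cite{DNT} step it uses is more general (it only needs equality of supports, not flatness). One small remark: you explicitly assume $M$ is finitely generated, while the corollary does not state this; in the paper's setting this is harmless since relative Cohen--Macaulayness is only defined for finite modules, and in any case a faithful multiplication module is automatically finitely generated.
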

\begin{proof} Using \cite[Theorem 2.2 (b)]{AFM}, we have $\Supp(M)=\Spec(R)$. Thus $\cd(\fa,M)=\cd(\fa,R)$ by \cite[Theorem 2.2]{DNT}. On the other hand, $\grade(\fa,M)=\grade(\fa,R)$ by \cite[Theorem 2.2 (a)]{AFM}. Therefore $\grade(\fa,M)=\cd(\fa,M)$ if and only if $\grade(\fa,R)=\cd(\fa,R)$ as required.
\end{proof}

\medskip
\begin{dfn}(see \cite{F} and \cite{V2})
The $R$-module $M$ is {\it semidualizing} if it satisfies the following:
\begin{itemize}
\item[(i)] $M$ is finitely generated,
\item[(ii)] The homotopy map ${\chi}^{R}_{M}: R\longrightarrow \Hom_{R}(M,M)$, defined by $r\mapsto [s\mapsto rs]$, is an isomorphism, and
\item[(iii)] $\Ext^{i}_{R}(M,M)=0$ for all $ i>0.$
\end{itemize}
\end{dfn}

\medskip
\begin{cor}\label{cc}
Let $M$ be a semidualizing $R$-module. Then $M$ is relative Cohen-Macaulay w.r.t $\fa$ if and only if $R$ is relative Cohen-Macaulay w.r.t $\fa$.
\end{cor}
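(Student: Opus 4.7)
The plan is to follow the template of Corollary~\ref{c1} verbatim: reduce the claim to the two equalities
\[
\cd(\fa,M)=\cd(\fa,R)\qquad\text{and}\qquad \grade(\fa,M)=\grade(\fa,R),
\]
so that the defining condition $\grade(\fa,-)=\cd(\fa,-)$ for relative Cohen-Macaulayness transfers between $R$ and $M$ at once.

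First I would settle the support equality. The homothety isomorphism $\chi^{R}_{M}\colon R\xrightarrow{\cong}\Hom_{R}(M,M)$ forces $\Ann_{R}(M)=0$, since any $r\in\Ann_{R}(M)$ corresponds to the zero endomorphism and hence must be zero in $R$. Because $M$ is finitely generated this gives $\Supp_{R}(M)=V(\Ann_{R}(M))=\Spec(R)$, and then \cite[Theorem 2.2]{DNT} (already invoked in Corollary~\ref{c1}) yields $\cd(\fa,M)=\cd(\fa,R)$.

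For the grade equality I would argue that an element $x\in\fa$ is $R$-regular if and only if it is $M$-regular. The direction ``$M$-regular $\Rightarrow$ $R$-regular'' is the clean one: apply $\Hom_{R}(M,-)$ to the exact sequence $0\to M\xrightarrow{x}M\to M/xM\to 0$ and use $\Ext^{1}_{R}(M,M)=0$ together with $\Hom_{R}(M,M)\cong R$ to conclude that multiplication by $x$ on $R$ is injective. The reverse direction is the semidualizing analogue of the fact that $\Ass_{R}(M)=\Ass_{R}(R)$: since $M\cong M\otimes_{R}\Hom_{R}(M,M)$ and $\Ext^{i}_{R}(M,M)=0$ for $i>0$, an $R$-regular $x$ remains a nonzerodivisor on $M$. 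Iterating extends this to regular sequences of every length, so maximal $R$-sequences and $M$-sequences in $\fa$ have the same length, giving $\grade(\fa,M)=\grade(\fa,R)$.

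With both equalities in hand, $M$ is relative Cohen-Macaulay w.r.t.\ $\fa$ precisely when $\grade(\fa,M)=\cd(\fa,M)$, which, by the two displayed equalities, is equivalent to $\grade(\fa,R)=\cd(\fa,R)$, i.e.\ to $R$ being relative Cohen-Macaulay w.r.t.\ $\fa$. The only delicate point is the transfer of regular sequences from $R$ to $M$, which is where the full strength of conditions (ii) and (iii) of the semidualizing definition comes in; everything else is a direct application of the two cited results \cite[Theorem 2.2]{DNT} and the definition of relative Cohen-Macaulayness.
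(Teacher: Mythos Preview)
Your overall strategy coincides with the paper's: both reduce the statement to the two equalities $\cd(\fa,M)=\cd(\fa,R)$ and $\grade(\fa,M)=\grade(\fa,R)$, after which the equivalence is immediate from the definition. The paper simply cites \cite[Proposition~2.1.16]{W} for $\Supp_R(M)=\Spec(R)$ (your direct argument via $\Ann_R M=0$ is correct and is essentially what that proposition says) and \cite[Theorem~2.2.6]{W} for the grade equality, whereas you attempt to prove the latter by hand.

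The gap is in your justification of ``$R$-regular $\Rightarrow$ $M$-regular''. The sentence ``since $M\cong M\otimes_R\Hom_R(M,M)$ and $\Ext^i_R(M,M)=0$ for $i>0$, an $R$-regular $x$ remains a nonzerodivisor on $M$'' does no work: the isomorphism $M\otimes_R\Hom_R(M,M)\cong M\otimes_R R\cong M$ is tautological, and the Ext vanishing is irrelevant to this direction. A correct argument uses the associated-prime formula $\Ass_R(\Hom_R(N,L))=\Supp_R(N)\cap\Ass_R(L)$ for finitely generated $N$: applied with $N=L=M$, together with the homothety isomorphism and $\Supp_R(M)=\Spec(R)$, it gives $\Ass_R(R)=\Ass_R(M)$, so zerodivisors on $R$ and on $M$ coincide. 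For the iteration you also need that $M/xM$ is again semidualizing over $R/xR$ whenever $x$ is $R$-regular (hence $M$-regular); this is standard but must be stated, since otherwise you cannot apply the base case to $R/(x_1,\ldots,x_i)$ and $M/(x_1,\ldots,x_i)M$. These two facts together are precisely the content of \cite[Theorem~2.2.6]{W} that the paper invokes.
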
 
\begin{proof}The assertion easily follows by \cite[Proposition 2.1.16]{W} and \cite[Theorem 2.2.6]{W}.
\end{proof}

\medskip
Comparing with \cite[Proposition 5.1]{M}, we prove the following result on canonical modules by getting benefit from the concept of semidualizing modules.
\begin{prop}\label{prop31}
Let $(R,\fm)$ be a local ring and $w_{R}$ be its canonical module. Then $R$ is relative Cohen-Macaulay w.r.t $\fa$ if and only if $w_{R}$ is relative Cohen-Macaulay w.r.t $\fa$.
\end{prop}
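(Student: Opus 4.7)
The plan is to deduce this statement directly from Corollary~\ref{cc} by recognizing that the canonical module $w_R$ of a Cohen-Macaulay local ring is semidualizing. Following Sharp's theorem recalled in the introduction, the symbol $w_R$ is attached to a Cohen-Macaulay local ring $R$ (complete, or more generally a homomorphic image of a Gorenstein ring), so I would work under that standing hypothesis.

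With that setup, I would verify the three axioms in the definition of a semidualizing module for $M = w_R$. Finite generation is part of the very definition of a canonical module. The homothety map ${\chi}^{R}_{w_R}\colon R \to \Hom_R(w_R, w_R)$, $r \mapsto [s \mapsto rs]$, is an isomorphism, and $\Ext^{i}_R(w_R, w_R) = 0$ for all $i > 0$: both are classical properties of the canonical module over a Cohen-Macaulay local ring, obtainable from local duality together with the Matlis dual description $w_R = \Hom_R(H^{\dim R}_\fm(R), E_R(R/\fm))$ quoted in the introduction. No new computation is needed; this is textbook material (e.g.\ Bruns--Herzog \cite{BH}).

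Once $w_R$ is known to be semidualizing, Corollary~\ref{cc} applied with $M = w_R$ gives the stated equivalence: $w_R$ is relative Cohen-Macaulay w.r.t.\ $\fa$ if and only if $R$ is relative Cohen-Macaulay w.r.t.\ $\fa$. I do not anticipate a genuine obstacle. The only point requiring care is to confirm that $w_R$ satisfies all three semidualizing axioms in the Cohen-Macaulay setting that makes $w_R$ exist, which is entirely standard.
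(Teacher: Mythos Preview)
Your proposal is correct and matches the paper's approach: the paper also uses that canonical modules are semidualizing, then invokes the same two references (\cite[Proposition 2.1.16]{W} and \cite[Theorem 2.2.6]{W}) underlying Corollary~\ref{cc}. The only cosmetic difference is that the paper unpacks the argument (showing $\grade(\fa,w_R)=\grade(\fa,R)$ and $\cd(\fa,w_R)=\cd(\fa,R)$ separately) rather than citing Corollary~\ref{cc} as a black box.
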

\begin{proof}
As the canonical modules are semidualizing, we have $\grade(\fa,w_{R})=\grade(\fa,R)$ by \cite[Theorem 2.2.6]{W}. On the other hand, $\Supp_{R}(w_{R})=\Spec(R)$ by \cite[Proposition 2.1.16]{W} and so $\cd(\fa,w_{R})=\cd(\fa,R)$ by \cite[Theorem 2.2]{DNT}. Therefore $\grade(\fa,R)=\cd(\fa,R)$ if and only if $\grade(\fa,w_{R})=\cd(\fa,w_{R})$, as required.
\end{proof}

\medskip
Next result is about the cohomological deficiency modules of a relative Cohen-Macaulay ring. 
\begin{prop}\label{prop6} 
Let $(R,\fm)$ be a relative Cohen-Macaulay local ring w.r.t $\fa$ with $\cd(\fa,R)=c$. Then for all $0\leq i\leq c$, the $R$-modules $K^{i}_{\fa}(R)$ are either zero or $i$-cohomological dimensional relative Cohen-Macaulay modules w.r.t $\fa$.
\end{prop}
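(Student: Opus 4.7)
The plan is to split the claim according to whether $i = c$ or not, since relative Cohen-Macaulayness of $R$ w.r.t $\fa$ collapses most of the cohomology. For any $0 \le i < c$, the hypothesis gives $H^{i}_{\fa}(R) = 0$, so Definition~\ref{def} immediately yields $K^{i}_{\fa}(R) = D_{R}(H^{i}_{\fa}(R)) = 0$. All indices except $i = c$ are therefore trivial and contribute nothing further.

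The only nontrivial case is $i = c$. Here Lemma~\ref{lemm1} already does most of the heavy lifting: it asserts that $K^{c}_{\fa}(R)$ is a faithful relative Cohen-Macaulay $R$-module w.r.t $\fa$. What remains is to identify its cohomological dimension as exactly $c$. Since Lemma~\ref{lemm1} also supplies $\Ann_{R} K^{c}_{\fa}(R) = 0$, we have $\Spec R \subseteq \Supp_{R} K^{c}_{\fa}(R)$, and combining this with the bounds coming from \cite[Theorem 4.3]{Z} (already invoked in Lemma~\ref{lemm1}) pins down $\cd(\fa, K^{c}_{\fa}(R)) = c$. I expect this final dimension count to be the main obstacle; everything else is formal.

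An alternative, more conceptual route fits the RCMF viewpoint emphasized in the introduction. By Examples~\ref{E1}(a), $R$ is itself an RCMF module w.r.t $\fa$ whose cd-filtration is the degenerate chain $0 = R_{0} = \cdots = R_{c-1} \subsetneq R_{c} = R$. Proposition~\ref{lemma} then gives $K^{i}_{\fa}(R) \cong K^{i}_{\fa}(R_{i}/R_{i-1})$ for $1 \le i \le c$; the right-hand side vanishes for $i < c$ and equals $K^{c}_{\fa}(R)$ for $i = c$, reducing the surviving case to Lemma~\ref{lemm1} as before. Either route delivers the conclusion: for each $0 \le i \le c$, $K^{i}_{\fa}(R)$ is either zero or an $i$-cohomological dimensional relative Cohen-Macaulay $R$-module w.r.t $\fa$.
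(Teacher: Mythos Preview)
Your proposal is correct, and your second ``conceptual'' route is precisely the paper's own argument: the paper applies Example~\ref{E1}(a) to observe that $R$ is RCMF with degenerate cd-filtration, then invokes Proposition~\ref{lemma} to get $K^{i}_{\fa}(R)\cong K^{i}_{\fa}(I_i)$ for the factors $I_i$, and finally appeals to \cite[Theorem~4.3(i)]{Z} for the surviving case $i=c$ (where you instead cite Lemma~\ref{lemm1}, which in turn rests on \cite[Theorem~4.3]{Z}).

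Your first route, however, is genuinely more direct than the paper's. Since relative Cohen--Macaulayness of $R$ w.r.t $\fa$ forces $H^{i}_{\fa}(R)=0$ for every $i\neq c$, the modules $K^{i}_{\fa}(R)$ vanish outright for $i<c$ and there is no need to route the argument through the cd-filtration and Proposition~\ref{lemma} at all; the only content is the $i=c$ case, handled by Lemma~\ref{lemm1} together with the faithfulness argument you sketch (or directly by \cite[Theorem~4.3]{Z}) to get $\cd(\fa,K^{c}_{\fa}(R))=c$. What the paper's detour through the RCMF framework buys is uniformity of presentation with the rest of Section~2, but logically your shortcut is cleaner for this particular proposition.
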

\begin{proof} By assumption and Example~\ref{E1}(a), we deduce that the quotient ideal $I_{i}:={\fa_{i}}/{\fa_{i-1}}$ in the cd-filtration $\{\fa_i\}_{i=0}^{c}$ of $R$ is either zero or an $i$-cohomological dimensional relative Cohen-Macaulay ideal w.r.t $\fa$ for all $1\leq i\leq c$ . In view of Proposition~\ref{lemma}, it follows that $K^{i}_{\fa}(R)\cong K^{i}_{\fa}({I}_{i})$ for all $0\leq i\leq c$. Since $I_{i}$ is either zero or an $i$-cohomological dimensional relative Cohen-Macaulay ideal w.r.t $\fa$, we have $K^{i}_{\fa}(I_{i})$ is either zero or the cohomological canonical module of $I_{i}$. But the cohomological canonical module of $I_{i}$ is relative Cohen-Macaulay module w.r.t $\fa$ by \cite[Theorem 4.3 (i)]{Z}. Thus the assertion follows.
\end{proof}

\medskip
At the end, we give some results about relative Cohen-Macaulay rings under some mild assumptions. 
\begin{prop}\label{ppp}
Let $f:R\longrightarrow R'$ be a faithfully flat homomorphism of Noetherian rings. Then $R$ is relative Cohen-Macaulay w.r.t $\fa$ if and only if $R'$ is relative Cohen-Macauly ring w.r.t $\fa R'$.
\end{prop}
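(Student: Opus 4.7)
The plan is to reduce the claim immediately to the previously established Remark~\ref{r8}, applied with $M=R$. Recall that by definition $R$ is relative Cohen-Macaulay with respect to $\fa$ exactly when $\grade(\fa,R)=\cd(\fa,R)$, and likewise $R'$ is relative Cohen-Macaulay with respect to $\fa R'$ exactly when $\grade(\fa R',R')=\cd(\fa R',R')$. So the whole statement is about transporting these two numerical invariants across the faithfully flat map $f$.

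First I would note that since $R\lo R'$ is faithfully flat and $R\otimes_R R'\cong R'$ as $R'$-modules, Remark~\ref{r8} applied to $M=R$ gives the two equalities
\[
\cd(\fa,R)=\cd(\fa R',R'),\qquad \grade(\fa,R)=\grade(\fa R',R').
\]
These are simply instances of the Flat Base Change Theorem \cite[Theorem 4.3.2]{BSH} for local cohomology (for the cohomological dimension) together with the standard behaviour of grade under faithfully flat extensions.

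Combining these two equalities, the condition $\grade(\fa,R)=\cd(\fa,R)$ is equivalent to $\grade(\fa R',R')=\cd(\fa R',R')$, which is exactly the desired biconditional. No further machinery is needed, and there is no serious obstacle: the only subtlety is invoking faithful flatness (rather than mere flatness) to ensure the equalities hold in both directions, but this is already built into Remark~\ref{r8}.
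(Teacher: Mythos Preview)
Your proof is correct and essentially identical to the paper's: both rely on the Flat Base Change isomorphism $H^{i}_{\fa}(R)\otimes_R R'\cong H^{i}_{\fa R'}(R')$ (which is what underlies Remark~\ref{r8}) to transfer $\cd$ and $\grade$ across the faithfully flat map. The paper states this isomorphism directly rather than citing the remark, but the argument is the same.
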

\begin{proof} 
As $H^{i}_{\fa}(R)\otimes_R R'\cong H^{i}_{\fa R'}(R')$ for all $i\geq 0$, we get $\cd(\fa,R)=\cd(\fa R',R')$ and $\grade(\fa,R)=\grade(\fa R',R')$. Therefore the assertion follows.
\end{proof}

\medskip
\begin{cor}
Let $f:R\longrightarrow R'$ be a faithfully flat homomorphism of Noetherian local rings and $R$ be a relative Cohen-Macaulay w.r.t $\fa$. Then $H^{i}_{\fa}(R')\neq 0$ if and only if $(0:_{R}H^{i}_{\fa R'}(R'))=0$.
\end{cor}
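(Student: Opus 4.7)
\smallskip
\textbf{Proof proposal.} The plan is to combine Proposition~\ref{ppp} with the Flat Base Change Theorem and Lynch's annihilator result \cite[Theorem 3.3]{L} (already invoked in Lemma~\ref{lemm1}). First, by Proposition~\ref{ppp} the ring $R'$ is relative Cohen-Macaulay w.r.t.\ $\fa R'$ with $\cd(\fa R',R')=\cd(\fa,R)=:c$. Moreover, by the Flat Base Change Theorem \cite[Theorem 4.3.2]{BSH} and the flatness of $f$, there is a natural isomorphism
\[
H^{i}_{\fa}(R')\cong H^{i}_{\fa R'}(R')\quad\text{for every } i\geq 0,
\]
so the two sides of the claimed equivalence are really a statement about the single module $H^{i}_{\fa R'}(R')$.

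The ``backward'' direction is the easy one: if the zero module were involved, then $(0:_{R}H^{i}_{\fa R'}(R'))=R\neq 0$, contradicting $(0:_{R}H^{i}_{\fa R'}(R'))=0$; hence the vanishing of the annihilator forces $H^{i}_{\fa R'}(R')\neq 0$, and by the base change isomorphism $H^{i}_{\fa}(R')\neq 0$ as well. So I only need to establish the forward implication.

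For the forward direction, suppose $H^{i}_{\fa}(R')\neq 0$. By the relative Cohen-Macaulay property of $R'$ w.r.t.\ $\fa R'$, this forces $i=c$. Now I invoke \cite[Theorem 3.3]{L} (exactly as in the proof of Lemma~\ref{lemm1}) to conclude that $\Ann_{R'}H^{c}_{\fa R'}(R')=0$. Finally I transfer this annihilator statement from $R'$ to $R$: an element $r\in R$ annihilates $H^{c}_{\fa R'}(R')$ precisely when $f(r)\in\Ann_{R'}H^{c}_{\fa R'}(R')=0$, i.e.\ when $f(r)=0$, and since $f$ is faithfully flat it is injective, so $r=0$. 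Hence $(0:_{R}H^{c}_{\fa R'}(R'))=0$, as desired.

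I do not expect any genuine obstacle here; the only point worth double-checking is the passage from $\Ann_{R'}H^{c}_{\fa R'}(R')=0$ to $(0:_{R}H^{c}_{\fa R'}(R'))=0$, which is where the hypothesis that $f$ is \emph{faithfully} flat (as opposed to merely flat) gets used, via injectivity of $f$. The remaining ingredients---Flat Base Change, Proposition~\ref{ppp}, and Lynch's theorem---are all applied in black-box fashion.
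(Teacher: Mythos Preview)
Your proposal is correct and follows exactly the route indicated by the paper's own one-line proof (``Apply Proposition~\ref{ppp} and \cite[Theorem 3.3]{L}''); you have simply unpacked the details, including the transfer of the annihilator from $R'$ to $R$ via injectivity of the faithfully flat map. One small correction worth noting: the isomorphism $H^{i}_{\fa}(R')\cong H^{i}_{\fa R'}(R')$ (with $R'$ regarded as an $R$-module on the left) is the Independence Theorem \cite[Theorem 4.2.1]{BSH}, not the Flat Base Change Theorem---Flat Base Change would instead give $H^{i}_{\fa}(R)\otimes_{R} R'\cong H^{i}_{\fa R'}(R')$---but this does not affect the validity of your argument.
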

\begin{proof}
Apply Proposition~\ref{ppp} and \cite[Theorem 3.3]{L}.
\end{proof}

\medskip
\begin{exmp}
Let $\fa\subseteq Jac(R)$. Then $\widehat{R}$, the $\fa$-adic completion of $R$, is relative Cohen-Macaulay ring w.r.t $\fa\widehat{R}$ if and only if $R$ is relative Cohen-Macauly ring w.r.t $\fa$.
\end{exmp}

\medskip
\begin{exmp}
The polynomial ring $R[x]$ is relative Cohen-Macaulay ring w.r.t $\fa R[x]$ if and only if $R$ is relative Cohen-Macaulay ring w.r.t $\fa$.
\end{exmp}

{\bf Acknowledgements.} 
The authors are grateful to the reviewer for suggesting several improvements to the manuscript. Moreover, the authors would like to express their thanks to Dr. Raheleh Jafari from Kharazmi University for her useful comments.

\end{document}